\def\vv{\vskip0.5truecm}
\def\bel{\begin{equation}\label}
\def\eeq{\end{equation}}
\def\ds{\displaystyle}
\def\qq
\def\setmap{\rightsquigarrow}
\def\R{\mathbb{R}}
\def\nn{\mathbb{N}}
\newtheorem{theorem}{Theorem}[section]
\newtheorem{fact}[theorem]{Fact}
\newcommand{\bfact}{\begin{fact}}
\newcommand{\efact}{\end{fact}}
\def\carby{\subset\kern-3pt\subset}
\def\comment#1{}
\def\id{\kern.2em{\rm I}\kern-.56em{\rm 1}}
\def\bal{\begin{array}{l}}
\def\bac{\begin{array}{c}}
\def\ea{\end{array}}
\def\bel{\begin{equation}\label}
\def\eeq{\end{equation}}
\newcommand{\N}{\mathbb{N}}
\def\bel{\begin{equation}\label}
\def\eeq{\end{equation}}
\def\ds{\displaystyle}
\def\x{\mathcal x}
\def\U{\mathcal U}
\def\us{\texttt{u}}
\def\v{{\bf v}}
\def\x{{x_\star}}
\def\vs{\vskip1truecm}
\def\minimize{\hbox{\rm minimize}}
\title{\begin{huge}\bf An Abstract 
Maximum Principle \\ for constrained  minimum problems
\end{huge}
}
\author{ \it Monica Motta,
  Franco Rampazzo}
\date{}
\newcommand\ackname{Acknowledgements}
  \newenvironment{acknowledgements}{%
      \titlepage
      \null\vfil
      \@beginparpenalty\@lowpenalty
      \begin{center}%
        \bfseries \ackname 
        \@endparpenalty\@M
      \end{center}}%
     {\par\vfil\null\endtitlepage}
 \newcommand{\bpm}{\begin{pmatrix}}
 \newcommand{\epm}{\end{pmatrix}}
\begin{document}
\maketitle
\newtheorem{defin}{Definition}[section]
\newtheorem{rem}{Remark}[section]
\newtheorem{prop}{Proposition}[section]
\newtheorem{lemma}{Lemma}[section]
\newtheorem{cor}{Corollary}[section]
\newtheorem{example}{Example}[section]
%

 
\section{Introduction} 
This article makes no claim to originality, other than, perhaps, the simple statement here called the {\it Abstract Maximum Principle}. Actually, the whole contents are strongly based on H. Sussmann's and coauthors' papers (in particular, on \cite{suss1}, \cite{susswill}, \cite{suss2}), in which, in a much more general context,  the set-separation approach is regarded as foundational for necessary conditions for minima.
So, rather than being the exposition of original material,  this paper  has mainly a pedagogical purpose.  From the Abstract Maximum Principle it  is possible to deduce  several necessary conditions for both finite dimensional minimum problems and for optimal control problems. More in general, this Principle  seems apt  to capture some consequences of the geometric and  topological idea of (possibly vector-valued) minimization in a parametrized problem.

	\section{Approximating cones }
If $X$ is a real vector space,  a subset $C\subseteq X$ is a {\it cone }
if $c\in C\iff rc\in C$ for every  non-negative  real number  $r$. A {\it convex cone} $C$
is a cone $C$ that is also a convex subset. Clearly, a subset $C$ is a convex cone if and only if it is invariant by  non-negative linear combinations, namely, if and only if,  for every positive integer $q$ and for all $q$-tuples  $(\alpha_1,\ldots, \alpha_q)\in [0,+\infty)^q$, 
$(c_1,\ldots, c_q)\in C^q$, one has $\ds \sum_{i=1}^q \alpha_1c_i \in C$.


{\bf Definition}{ \it (Directional differentiability).} Let $n$,\,$m$ be positive integers,  and consider a point  $\hat x\in\R^m$ and a real number $\delta>0$. Let $C\subseteq\R^m$ be a convex cone, and let  $F:\hat x+C\cap B_{m}(\delta)  \to\R^n$ be  a continuous function. We say that {\rm $F$ is  differentiable at $\hat x$ in the direction of $C$,  with differential $L:\R^m\to \R^n$} if 
\bel{bol}F(\hat x + c) = F(\hat x) + L  c + o(c), \quad \forall c\in   C\cap B_{m}(\delta),
\end{equation}
{where $B_{m}(\delta)$ is the closed ball of $\R^m$ with center 0 and radius $\delta$, and $o(\cdot): C\to  \R^n$ denotes any function such that $\lim_{c\to0}\frac{|o(c)|}{|c|}=0$.}  

\subsubsection{Boltyanski approximating cones}

\begin{defin}\label{approximatingcone}
Let us consider a subset $ \mathcal A\subseteq \R^n $ and a point  $ y\in \mathcal A$, and let $ \mathbf{K}$ be a convex cone in $ \R^n$. We say that  $\mathbf{K}$ is a  {\rm Boltyanski approximating cone } to $\mathcal A$
at $y$ if $$\mathbf{K}=L  C$$
where,

$i)$ for a non-negative integer $m$, $C\subseteq\R^{m} $ is a convex cone,

$ii)$
$L:\R^m\to\R^n$ is a linear mapping, 
and 

$iii)$  there exist  $\delta>0$  and  a continuous  map
$ F: C\cap B_{m}(\delta) \to \mathcal A $  such that $F(0)=y$ and $F$ is differentiable at $0$ in the direction of $C$  with differential $L$, i.e.
\bel{bol}F(c) = y + L  c + o(c), \quad \forall c\in    C\cap B_{m}(\delta).
\end{equation}
\end{defin}
For the sake of  brevity, we  will often use the shorter expression  {\it approximating cone } in place of  {\it Boltyanski approximating cone}.
{
\begin{prop}[Equivalent definition of  approximating cone] \label{eqdefappcone} Let $ {\mathcal A}\subseteq \R^n $,  $ y\in {\mathcal A}$, and let $ \mathbf{K}$ be a convex cone in $ \R^n$.

Then, $\mathbf{K}$ is a  Boltyanski  approximating cone  to ${\mathcal A}$ at y {\em if
	and only if } there exist $\varepsilon>0$  and a continuous
map $ G:   \mathbf{K}\cap B_{n}(\varepsilon) \to{\mathcal A} $, such that $G(0)=y$ and
$$
G(k)=y + k +  o(k)
$$
as $ k\to 0$,  $k\in  \mathbf{K}\cap B_{n}(\varepsilon)$.
\end{prop}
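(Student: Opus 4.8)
The statement is an equivalence, so I would prove the two implications separately, expecting essentially all of the work to sit in the passage from Definition \ref{approximatingcone} to the $G$-characterization. The implication ``if such a $G$ exists then $\mathbf{K}$ is a Boltyanski approximating cone'' is immediate: given $\varepsilon>0$ and a continuous $G:\mathbf{K}\cap B_n(\varepsilon)\to\mathcal A$ with $G(0)=y$ and $G(k)=y+k+o(k)$, it suffices to take $m=n$, $C=\mathbf{K}$, $L=\idrn$, $\delta=\varepsilon$ and $F=G$. Then $LC=\mathbf{K}$ and the required expansion holds verbatim (with differential the identity), so $\mathbf{K}$ is a Boltyanski approximating cone to $\mathcal A$ at $y$.

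For the converse I am given a convex cone $C\subseteq\R^m$, a linear $L:\R^m\to\R^n$ with $LC=\mathbf{K}$, and a continuous $F:C\cap B_m(\delta)\to\mathcal A$ with $F(0)=y$ and $F(c)=y+Lc+o(c)$. The natural attempt is to produce $G$ as a composition $G=F\circ s$, where $s:\mathbf{K}\cap B_n(\varepsilon)\to C$ is a section of $L$, i.e.\ $Ls(k)=k$, chosen continuous, positively homogeneous of degree one, and with $s(0)=0$. Indeed, if moreover $|s(k)|\le M|k|$ for some constant $M$, then (shrinking $\varepsilon$ so that $|s(k)|\le M\varepsilon<\delta$) one gets $G(k)=F(s(k))=y+Ls(k)+o(s(k))=y+k+o(s(k))$, and the linear bound converts $o(s(k))$ into $o(k)$ by writing $|o(s(k))|/|k|=\big(|o(s(k))|/|s(k)|\big)\,\big(|s(k)|/|k|\big)$ and letting $k\to0$. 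To build such an $s$ I would use the minimal-norm selection, taking $s(k)$ to be the point of least norm in the convex set $C\cap L^{-1}(k)$ (the metric projection of the origin onto it); positive homogeneity of degree one is then automatic, since $C$ is a cone and $L$ is linear. When $L$ is injective this section is simply the restriction to $\mathbf{K}$ of a bounded linear inverse of $L$, it takes values in $C$ by uniqueness of preimages, and the scheme closes at once.

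The hard part, and I suspect the real content of the Proposition, is that this clean scheme must be repaired when $L$ fails to be injective on $C$, i.e.\ when $C\cap\ker L\neq\{0\}$. In that case a continuous section $s$ with $s(0)=0$ and linear growth need not exist at all: it can happen that every preimage in $C$ of a direction $k$ on the relative boundary of $\mathbf{K}$ has norm bounded away from zero, so that $s$ is not even continuous at the origin and $|s(k)|/|k|$ is unbounded. The remedy, which I expect to be the crux, is to abandon exact preimages and instead select, for each $k$, an \emph{approximate} preimage $c(k)\in C$ with $Lc(k)=k+o(k)$ and $|c(k)|\to0$ as $k\to0$, exploiting the slack already present in the remainder: from $F(c(k))=y+Lc(k)+o(c(k))=y+k+o(k)+o(c(k))$ one only needs $o(c(k))=o(k)$, a condition that tolerates $|c(k)|$ being somewhat larger than $|k|$ and can be met even when no linearly bounded section exists. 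Arranging $c(\cdot)$ to be continuous, (positively) homogeneous, and so that the two remainder terms genuinely combine to $o(k)$ uniformly is the delicate point; I would organize it by using homogeneity to reduce to unit vectors $k$, the convexity of $C$ to interpolate preimages, and a continuous-selection/projection argument, while paying explicit attention to the directions in $C\cap\ker L$ and to the possible non-closedness of $\mathbf{K}$.
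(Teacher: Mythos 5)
Your sufficiency argument and the skeleton of your converse coincide with what the paper does: the paper also takes $m:=n$, $C:=\mathbf{K}$, $\delta:=\varepsilon$, $F:=G$, $L:=\mathrm{Id}$ for the easy direction, and for the converse it also writes $G:=F\circ M$ for a right inverse $M$ of $L$ --- exactly your section $s$, except that the paper takes $M$ \emph{linear}, postulating both $L\cdot Mk=k$ for all $k\in\R^n$ and $M\mathbf{K}=C$. Up to the point where you settle the case of $L$ injective on the span of $C$, you are therefore reproducing the paper's proof, and that part of your proposal is fine.

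The gap is your last paragraph. You correctly observe that when $C\cap\ker L\neq\{0\}$ a continuous, linearly bounded, positively homogeneous section of $L$ over $\mathbf{K}$ need not exist (and note also that your minimal-norm selection may fail to exist at all, since Definition \ref{approximatingcone} does not assume $C$ closed); but the proposed repair via approximate preimages is a plan, not a proof, and as stated it does not close. If you only secure $|c(k)|\to0$ with $|c(k)|/|k|\to\infty$, the term $o(c(k))$ need not be $o(k)$: the factorization $\frac{|o(c(k))|}{|k|}=\frac{|o(c(k))|}{|c(k)|}\cdot\frac{|c(k)|}{|k|}$ that you rely on only works with a linear bound on the section. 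A concrete obstruction is $C=\{(x,y,z):x\ge0,\ y\ge0,\ z^2\le xy\}\subset\R^3$ with $L(x,y,z)=(x,z)$, so that $\mathbf{K}=LC=(\,]0,+\infty[\times\R)\cup\{0\}$: along $k=(t^3,t)$ every $c\in C$ with $Lc=k+o(k)$ satisfies $|c|\ge t^2/x'$ with $x'=o(t)$, hence $|c|/|k|\to\infty$, so no choice of continuous approximate preimages makes your two remainders combine to $o(k)$, and any $G$ realizing the conclusion for this cone cannot factor through $F$. So the degenerate case is genuinely open in your write-up; what would be needed is either a construction of $G$ that does not pass through a (near-)section of $L$, or an explicit reduction to the injective case. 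In fairness, the paper's own proof does not close this case either: a linear $M$ with $L\cdot Mk=k$ for all $k\in\R^n$ and $M\mathbf{K}=C$ forces $L$ to be injective on $\mathrm{span}\,C$ (indeed $ML=\mathrm{Id}$ on $C$), so the paper is implicitly restricting to exactly the situation you do handle; your identification of the difficulty is a genuine observation that goes beyond the printed argument, but it is flagged rather than resolved.
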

\begin{proof} The sufficiency is trivially obtained by setting $m:=n$,  $C:=\mathbf{K}$, $\delta:=\varepsilon$ $F:=G$, $L:=Id$,  so that the stated condition coincides with the definition of  approximating cone.

To prove the necessity of the condition, let $F,\delta,C,L,\mathbf{K}$ be as
in Definition \ref{approximatingcone}.
  Without loss of generality we can assume  $m\leq  n$.
	Let $M: \R^{n}\to\R^m$ be  a right inverse matrix of $L$, namely a
	linear map such that
	$L\cdot M  k=k$ for every $k\in \R^n$,  $M \mathbf{K} = C$.  
	Finally,  let us define the neighborhood  of $0$ $W:= M  B_{n}(\delta)$,  and let us consider 
	the map $G: \mathbf{K}\cap W\to {\mathcal A}$ by setting $G(k):= F(M k)$.
	Notice that $G(0)=0$, $G$ is continuous and, since $M k\in C$ for every $k\in \mathbf{K}$,
	  $G(k) = F(M  k) = y + L\cdot Mk + o(Mk) = y + k + o(k)$.
\end{proof}

\begin{rem}\label{subapp} {\rm Clearly, if  $\mathbf{K}$ is a   Boltyanski approximating cone  to $\mathcal A$
	at $y$, then every convex subcone $\hat{\mathbf{K}}\subseteq \mathbf{K}$ is   a   Boltyanski approximating cone  to $\mathcal A$
	at $y$ as well. Indeed, if $F,\delta,C,L$ are as in Definition \ref{approximatingcone}, setting $ \hat C:=\{c\in C,\,\,\,Lc\in \hat{\mathbf{K}}\}$  and letting $\hat F:  \hat C\cap B_{m}(\delta) \to \mathcal A $ be the restriction of $F$ to $\hat C$, we obtain that $\hat C$ (is a convex cone which) verifies 
	\bel{bolhat}\hat F(c) = y + L c + o(c), \quad \forall c\in B(0,\delta)\cap \hat C, \qquad \hat{\mathbf{K}}= L\hat C.
\end{equation} }\end{rem}

%
%

\subsubsection{Examples of Boltyanski approximating cones}

{\bf  Approximating cones to smooth manifolds.}\,\,Let ${\mathcal A}\subseteq \mathbb{R}^n$ be a $m$-dimensional $C^1$ submanifold, $m\leq n$, $y\in {\mathcal A}$.

{\it Any convex cone ${\mathbf{K}}$ contained in the
tangent space $T_y {\mathcal A}$ is an     approximating cone to ${\mathcal A}$ at  $y$.}  

By Remark \ref{subapp} it is sufficient to prove that this is true  in the special case when $ {\mathbf{K}}= T_y {\mathcal A}$. For this purpose, consider some  open  neighbourhood
$W\subset\R^n$ 
of $y$ and let  $\Phi :{\mathcal A}\cap W \to \R^m $ be  a coordinate  chart from  $A\cap W$ { onto the open set  $U := \Phi\left({\mathcal A}\cap W \right)$. 
It is not restrictive to assume that $\Phi(y) =0$, so that $U$ is an open neighbourhood of $0$.   Let us choose $\delta>0$ such that $U\supseteq B_m(\delta)$  and  let us define $F$ as the restriction of $\Phi^{-1}$  to $B_m(\delta)$. Setting  $C:=\R^m$, we obtain 
$$ F(c)= y + L  c + o(c), \quad \forall c\in  C\cap B_m(\delta), \quad  T_y {\mathcal A} = LC,$$ where  $L:=\ds D\Phi^{-1}(0)$. Hence   $T_y {\mathcal A} =  L C$  is an approximating cone to ${\mathcal A}$ at $y$.
}

In particular, if  the submanifold ${\mathcal A}$ is the local zero level 
of a map $( \varphi_1,...,\varphi_{k}):\R^n\to\R^k$, namely 
$$
{\mathcal A}=\{x\in\R^n :\quad \varphi_1(x) = 0,...,\varphi_k(x)=0\}
$$
for suitable $C^1$ maps $\varphi_1,...,\varphi_{k}$  ($k=n-m$)   such that { the gradients 
$$\ds \nabla\varphi_1(y),..., \nabla\varphi_k(y)$$ are linearly independent, then
the subspace
$$\begin{array}{c}
\Big\{w\in\R^n : \ \nabla\varphi_i(y)   w = 0,\,\, \forall i=1,\ldots,k \Big\} = \ker  \left(\text{\rm span}\left\{ d\varphi_i(y),  \,\,   i=1,\ldots,k \right\} \right) 
\end{array}
$$}
(is isomorphic to $T_y{\mathcal A}$ and)  is a  Boltyanski  approximating cone to ${\mathcal A}$ at
$y$.

\vskip0.4truecm 
\noindent{\bf Approximating cones to boundaries of sublevels' intersections.}

Let $k,r$ be positive integers and consider the closed subset
$$
{\mathcal A}=\Big\{x\in\R^n :\qquad \varphi_1(x) = 0,...,\varphi_k(x)=0, \ \  h_1(x) \leq 0,...,h_r(x)\leq 0 \Big\}
$$
for suitable $C^1$ maps $\varphi_1,...,\varphi_k, h_1,...,h_r$. Let $y\in {\mathcal A}$ be such that, for a possibly empty subset  $\{i_1,\dots,i_q\}\subset \{1,\dots,r\}$, one has
$h_{i_1}(y) = 0,...,h_{i_q}(y)=0$, the gradients  $$\nabla h_{i_1} (y),...,\nabla h_{i_q} (y),\nabla\varphi_1(y),..., \nabla\varphi_k(y),$$ are linearly independent, and $h_j(y)<0$ for anx $j\in
\{1,\dots,r\}\backslash \{i_1,\dots,i_q\}$.

Then, one can check that
{ \it {$$
   {\mathbf{K}}= \left\{w\in\R^n: \  \nabla \varphi_\ell(x_\star)\,w=0, \  \nabla h_{i_j}(x_\star)\, w \leq 0, \ \ell=1,\dots,k, \ i=1,\dots,q \right\}
   $$}
%
is a  Boltyanski approximating cone to ${\mathcal A}$ at
$y$.}

\vskip0.4truecm

%

\section{Transversality of cones}\label{sec:cones}
Let    $X$ be a finite-dimensional, real vector space.

\begin{defin}\label{trancon}  Let   
$ {\mathbf{K}}_1$, ${\mathbf{K}}_2\subseteq X$  be convex cones. \begin{enumerate}
\item We say that $ {\mathbf{K}}_1$ and $ {\mathbf{K}}_2 $ are {\em transversal} if $$
{\mathbf{K}}_1-{\mathbf{K}}_2:=\big\{k_1-k_2 \ \ | \ (k_1,k_2)\in {\mathbf{K}}_1\times {\mathbf{K}}_2\big\} =X ;$$
\item  we say that $ {\mathbf{K}}_1$ and $ {\mathbf{K}}_2$  are {\em strongly transversal},
if they are transversal and\linebreak $ {\mathbf{K}}_{1}\cap {\mathbf{K}}_{2} \neq\{0\}$.

\end{enumerate}
\end{defin}
\noindent
{\bf Some examples in $\R^n$}\begin{itemize}
\item In $\R$, the only nontrivial cones are  $[0,+\infty[$, $]-\infty,0]$, and $\R$. The cones $[0,+\infty[$ and $]-\infty,0]$ are not transversal, while  all the  other  pairs of cones ($\ne\{0\}$)   are strongly transversal.
\item In $\R^2$, the cones  ${\mathbf{K}}_1:=[0,+\infty[^2$ and ${\mathbf{K}}_2:=]-\infty,0]\times [0,+\infty[$  are not transversal:  indeed, $ {\mathbf{K}}_1-{\mathbf{K}}_2= [0,+\infty[\times \R $. 
\item The cones  ${\mathbf{K}}_1={\mathbf{K}}_2:=[0,+\infty[^2\subset\R^2$ are strongly transversal. Notice incidentally  that, in $\R^3$, the cones $\tilde {\mathbf{K}}_1:={\mathbf{K}}_1\times\{0\}, \tilde {\mathbf{K}}_2:={\mathbf{K}}_2\times\{0\}$ are not transversal. 
\item  The cones $\hat {\mathbf{K}}_1=\hat {\mathbf{K}}_2:=\R\times \{0\}\subset \R^2$ are not transversal.

\item In $\R^2$, the cones  ${\mathbf{K}}_1:=\R\times \{0\}$, ${\mathbf{K}}_2:=\{0\}\times\R $  are transversal but they {\it are  not} strongly transversal.
Notice that they are subspaces: actually this is the only case where transversality can differ from strong transversality,  as shown by Proposition \ref{teo2} below.
\end{itemize} 

\begin{prop}\label{teo2}  Let   $ {\mathbf{K}}_1$,  ${\mathbf{K}}_2\subseteq X $ be convex cones. Then conditions i) and ii) below are equivalent:
\begin{itemize}
\item[\rm i)]  $ {\mathbf{K}}_1 , {\mathbf{K}}_2 $ are transversal;
\item[\rm ii)] either  $ {\mathbf{K}}_1 , {\mathbf{K}}_2 $ are strongly transversal  or  both $ {\mathbf{K}}_1$ and ${\mathbf{K}}_2 $
are linear subspaces  and $ {\mathbf{K}}_1\oplus {\mathbf{K}}_2=X $.
\end{itemize}
\end{prop}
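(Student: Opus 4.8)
The plan is to dispatch the easy implication ii) $\Rightarrow$ i) first and then concentrate on i) $\Rightarrow$ ii). For ii) $\Rightarrow$ i): if $\mathbf{K}_1,\mathbf{K}_2$ are strongly transversal they are transversal by definition; if instead both are subspaces with $\mathbf{K}_1\oplus\mathbf{K}_2=X$, then since a subspace satisfies $\mathbf{K}_i=-\mathbf{K}_i$ we have $\mathbf{K}_1-\mathbf{K}_2=\mathbf{K}_1+\mathbf{K}_2=X$, so they are transversal.

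For the substantial direction i) $\Rightarrow$ ii), I would argue by a dichotomy on the intersection $\mathbf{K}_1\cap\mathbf{K}_2$. If $\mathbf{K}_1\cap\mathbf{K}_2\neq\{0\}$, then transversality together with this gives strong transversality, and we land in the first alternative of ii). So the whole content lies in the remaining case: assuming $\mathbf{K}_1-\mathbf{K}_2=X$ and $\mathbf{K}_1\cap\mathbf{K}_2=\{0\}$, I must prove that $\mathbf{K}_1$ and $\mathbf{K}_2$ are linear subspaces in direct sum.

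The key observation I would exploit is that a convex cone is closed under addition: for $u,v\in\mathbf{K}_i$, convexity gives $\tfrac12(u+v)\in\mathbf{K}_i$ and the cone property then gives $u+v\in\mathbf{K}_i$. Using this, I would show $\mathbf{K}_1$ is symmetric. Fix $v\in\mathbf{K}_1$ and, by transversality, write $-v=k_1-k_2$ with $k_1\in\mathbf{K}_1$, $k_2\in\mathbf{K}_2$. Then $k_2=k_1+v\in\mathbf{K}_1$, being a sum of two elements of $\mathbf{K}_1$, so $k_2\in\mathbf{K}_1\cap\mathbf{K}_2=\{0\}$; hence $k_2=0$ and $-v=k_1\in\mathbf{K}_1$. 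Thus $\mathbf{K}_1=-\mathbf{K}_1$, and a convex cone equal to its own negative is a linear subspace (being closed under addition, under nonnegative scaling, and now under negation, hence under all real scalings). The same argument applied to $\mathbf{K}_2$ shows it is a subspace as well.

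Finally, with $\mathbf{K}_1,\mathbf{K}_2$ subspaces, the identity $\mathbf{K}_1-\mathbf{K}_2=\mathbf{K}_1+\mathbf{K}_2=X$ together with $\mathbf{K}_1\cap\mathbf{K}_2=\{0\}$ yields $\mathbf{K}_1\oplus\mathbf{K}_2=X$, placing us in the second alternative of ii). I do not anticipate a genuine obstacle; the only point requiring care — and the crux of the whole argument — is the symmetry step, where the combination of \emph{convex cones are closed under addition} with the triviality of the intersection forces each cone to contain the negatives of all its elements. Everything else is routine bookkeeping.
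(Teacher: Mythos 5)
Your proof is correct and follows essentially the same route as the paper's: in the case $\mathbf{K}_1-\mathbf{K}_2=X$ with $\mathbf{K}_1\cap\mathbf{K}_2=\{0\}$, you use closure of a convex cone under addition together with the trivial intersection to show each cone is symmetric, hence a subspace, and then conclude $\mathbf{K}_1\oplus\mathbf{K}_2=X$. The only (welcome) difference is that you spell out the easy implication and the ``convex cone is closed under addition'' step, which the paper leaves implicit.
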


\begin{proof} Condition  i) follows from ii) by definition. To prove that i) implies ii), let us  assume that $ {\mathbf{K}}_1 , {\mathbf{K}}_2 $ are transversal but not strongly transversal, so 
$ {\mathbf{K}}_1\cap {\mathbf{K}}_2=\{0\}$.  Let us prove  that ${\mathbf{K}}_1$ and ${\mathbf{K}}_2$ are linear subspaces.  Let $ k\in {\mathbf{K}}_1 $.  By the transversality assumption we get the existence of 
$ k_1\in {\mathbf{K}}_1 $ and $ k_2\in {\mathbf{K}}_2$  such that $
k=k_2-k_1 $, so that  $ k_1+k=k_2\in {\mathbf{K}}_2 $ and 
$ k_1+k\in {\mathbf{K}}_1 $. Since $ {\mathbf{K}}_1\cap {\mathbf{K}}_2=\{0\}$  it follows that  $ k_1+k=0$, so
$-k=k_1\in {\mathbf{K}}_1 $. Hence, for every $k\in {\mathbf{K}}_1$ one has $-k\in {\mathbf{K}}_1$. Since $ {\mathbf{K}}_1 $ is a convex cone, we deduce  that $ {\mathbf{K}}_1 $ is  a linear subspace.
Of course, the same conclusion  holds for $ {\mathbf{K}}_2 $ as well. Therefore, since ${\mathbf{K}}_1$ and ${\mathbf{K}}_2$ are linear subspaces, one has  $ {\mathbf{K}}_1+{\mathbf{K}}_2 =  {\mathbf{K}}_1-{\mathbf{K}}_2=X $, $ {\mathbf{K}}_1\cap {\mathbf{K}}_2=\{0\}$, that is $
{\mathbf{K}}_1 \oplus \mathbf{K}_2=X $.
\end{proof}

\begin{rem} {\rm Let us anticipate that Proposition \ref{teo2} will be crucial in  the proof of the Abstract Maximum Principle (see Th. \ref{AMP}), where, by construction,   one of the two involved cones is not a subspace. Hence, by  having  proved that these two cones are not strongly transversal, one gets that they are  not transversal (which eventually gives the Maximum Principle).
} \end{rem}

\begin{defin}\label{polar} Let    $K\subseteq X$ be cone. The (closed) convex cone   ${\mathbf{K}}^{\bot}\subseteq {X}^*$ defined as 
$$
{\mathbf{K}}^{\bot} \doteq \{ p\in X^* :\quad  p\cdot w \leq 0 \quad \forall \;
w\in {\mathbf{K}} \}.
$$
is called the   {\rm polar cone} of ${\mathbf{K}}$.

\end{defin}

\begin{prop}   
If  ${\mathbf{K}}_1,{\mathbf{K}}_2\subseteq X$ are closed convex  cones, then
\bel{coneint1}{\mathbf{K}}_1^\bot\cap {\mathbf{K}}_2^\bot = ({\mathbf{K}}_1+{\mathbf{K}}_2)^\bot,\end{equation}
which is equivalent  (by replacing $X$, ${\mathbf{K}}_1$, and ${\mathbf{K}}_2$ with $X^*$, ${\mathbf{K}}_1^\bot$, and ${\mathbf{K}}_2^\bot$, respectively) to
\bel{coneint2}
({\mathbf{K}}_1\cap {\mathbf{K}}_2)^\bot = {\mathbf{K}}_1^\bot+{\mathbf{K}}_2^\bot.
\end{equation}

\end{prop}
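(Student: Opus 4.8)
The plan is to establish the first identity \eqref{coneint1} by a purely set-theoretic argument, and then to deduce the second identity \eqref{coneint2} from it via the announced duality substitution, the only genuinely nontrivial input being the bipolar theorem. I stress at the outset that \eqref{coneint1} does not use closedness at all, whereas \eqref{coneint2} does.

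First I would prove \eqref{coneint1} directly from the definition of polar cone. Since $\mathbf{K}_1$ and $\mathbf{K}_2$ are cones, each contains $0$, so $\mathbf{K}_1\subseteq \mathbf{K}_1+\mathbf{K}_2$ and $\mathbf{K}_2\subseteq \mathbf{K}_1+\mathbf{K}_2$; hence a functional $p$ with $p\cdot w\le 0$ for every $w\in \mathbf{K}_1+\mathbf{K}_2$ satisfies in particular $p\cdot k_i\le 0$ for every $k_i\in\mathbf{K}_i$, giving $(\mathbf{K}_1+\mathbf{K}_2)^\bot\subseteq \mathbf{K}_1^\bot\cap\mathbf{K}_2^\bot$. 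Conversely, if $p\in\mathbf{K}_1^\bot\cap\mathbf{K}_2^\bot$, then by linearity $p\cdot(k_1+k_2)=p\cdot k_1+p\cdot k_2\le 0$ for all $k_1\in\mathbf{K}_1$, $k_2\in\mathbf{K}_2$, so $p\in(\mathbf{K}_1+\mathbf{K}_2)^\bot$. This yields \eqref{coneint1}.

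Next I would obtain \eqref{coneint2} by running \eqref{coneint1} inside the dual space $X^*$. Because $X$ is finite-dimensional I identify $X^{**}$ with $X$ and use the bipolar theorem, namely $(\mathbf{K}^\bot)^\bot=\mathbf{K}$ for every closed convex cone $\mathbf{K}$. Applying \eqref{coneint1} with $X,\mathbf{K}_1,\mathbf{K}_2$ replaced by $X^*,\mathbf{K}_1^\bot,\mathbf{K}_2^\bot$ (which are again closed convex cones) gives $(\mathbf{K}_1^\bot)^\bot\cap(\mathbf{K}_2^\bot)^\bot=(\mathbf{K}_1^\bot+\mathbf{K}_2^\bot)^\bot$; the bipolar theorem turns the left-hand side into $\mathbf{K}_1\cap\mathbf{K}_2$, so that $\mathbf{K}_1\cap\mathbf{K}_2=(\mathbf{K}_1^\bot+\mathbf{K}_2^\bot)^\bot$. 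Taking polars of both sides and invoking the bipolar theorem once more, this time for the cone $\mathbf{K}_1^\bot+\mathbf{K}_2^\bot$, produces $(\mathbf{K}_1\cap\mathbf{K}_2)^\bot=\mathbf{K}_1^\bot+\mathbf{K}_2^\bot$, which is \eqref{coneint2}.

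The substantive ingredient is the bipolar theorem, and this is where the real work sits: the inclusion $\mathbf{K}\subseteq(\mathbf{K}^\bot)^\bot$ is immediate from the definition, while the reverse inclusion rests on separation — if $x_0\notin\mathbf{K}$ with $\mathbf{K}$ closed and convex, projecting $x_0$ onto $\mathbf{K}$ (or applying Hahn–Banach) yields $p$ with $p\cdot x_0>0\ge\sup_{w\in\mathbf{K}}p\cdot w$, so $p\in\mathbf{K}^\bot$ and $x_0\notin(\mathbf{K}^\bot)^\bot$. I would prove this lemma first and then feed it into the two steps above. The most delicate point, however, which I expect to be the genuine obstacle, is the final collapse $(\mathbf{K}_1^\bot+\mathbf{K}_2^\bot)^{\bot\bot}=\mathbf{K}_1^\bot+\mathbf{K}_2^\bot$: the bipolar of a convex cone equals its closure, and the sum of two closed convex cones need not be closed. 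Thus in full generality one only gets $(\mathbf{K}_1\cap\mathbf{K}_2)^\bot=\overline{\mathbf{K}_1^\bot+\mathbf{K}_2^\bot}$, the inclusion $\supseteq$ being elementary (if $p=p_1+p_2$ with $p_i\in\mathbf{K}_i^\bot$, then $p\cdot w\le 0$ for all $w\in\mathbf{K}_1\cap\mathbf{K}_2$). To recover the stated equality I would impose a closedness hypothesis suited to the intended applications — for instance that the cones be polyhedral, whence $\mathbf{K}_1^\bot+\mathbf{K}_2^\bot$ is again polyhedral and therefore closed.
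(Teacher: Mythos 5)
Your proof of \eqref{coneint1} coincides with the paper's: the inclusion $\supseteq$ comes from $\mathbf{K}_i\subseteq\mathbf{K}_1+\mathbf{K}_2$ and the inclusion $\subseteq$ from the linearity of $p$ (the paper dresses the latter up as a contradiction argument, but it is the same computation), and you are right that closedness is irrelevant at this stage. Where you differ is on \eqref{coneint2}. The paper merely asserts that \eqref{coneint2} is ``equivalent'' to \eqref{coneint1} under the substitution $X\mapsto X^*$, $\mathbf{K}_i\mapsto\mathbf{K}_i^\bot$, and offers no further argument; you make that substitution explicit and observe that it invokes the bipolar theorem twice, once to rewrite $(\mathbf{K}_i^\bot)^\bot$ as $\mathbf{K}_i$ and once, after taking polars in $\mathbf{K}_1\cap\mathbf{K}_2=(\mathbf{K}_1^\bot+\mathbf{K}_2^\bot)^\bot$, to collapse $(\mathbf{K}_1^\bot+\mathbf{K}_2^\bot)^{\bot\bot}$ back to $\mathbf{K}_1^\bot+\mathbf{K}_2^\bot$. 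Your warning about this last step is correct and exposes a genuine flaw in the proposition as printed: the bipolar of a convex cone is its closure, and the sum of two closed convex cones in $\R^n$ need not be closed. For instance, with $C_1=\{(x,y,z):\ z\geq\sqrt{x^2+y^2}\}$ and $C_2=\{s(1,0,-1):\ s\geq 0\}$ in $\R^3$, the point $(0,1,0)$ is the limit of $(-s,1,\sqrt{s^2+1})+(s,0,-s)$ as $s\to\infty$ but does not belong to $C_1+C_2$; taking $\mathbf{K}_i:=C_i^\bot$ then makes the right-hand side of \eqref{coneint2} a proper, non-closed subset of the left-hand side. So in full generality only $(\mathbf{K}_1\cap\mathbf{K}_2)^\bot=\overline{\mathbf{K}_1^\bot+\mathbf{K}_2^\bot}$ holds, and your suggested remedies (polyhedrality, or a qualification such as $\mathbf{K}_1\cap\mathrm{int}\,\mathbf{K}_2\neq\emptyset$, which forces the sum of the polars to be closed) are the standard ways to restore the stated equality. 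In short, your argument is correct and strictly more careful than the paper's; the extra care is not pedantry but repairs an actual overstatement.
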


\begin{proof}
Let us prove \eqref{coneint1}. Since $({\mathbf{K}}_1+ {\mathbf{K}}_2)^\bot\subseteq {\mathbf{K}}_1^\bot$ and  $({\mathbf{K}}_1+ {\mathbf{K}}_2)^\bot\subseteq {\mathbf{K}}_2^\bot$,
one has
$$
{\mathbf{K}}_1^\bot\cap {\mathbf{K}}_2^\bot\supseteq({\mathbf{K}}_1+ {\mathbf{K}}_2)^\bot.
$$
Let us see that this is an  equality. Indeed, if there existed a $p\in ({\mathbf{K}}_1^\bot\cap {\mathbf{K}}_2^\bot)\setminus({\mathbf{K}}_1+ {\mathbf{K}}_2)^\bot$, $p\neq 0$, then, for some $\tilde w_1\in {\mathbf{K}}_1$ and  $\tilde w_2\in {\mathbf{K}}_2$ one would get 
$$
0\geq {{p\cdot\tilde w_1 + p\cdot\tilde w_2=}} p\cdot(\tilde w_1 + \tilde w_2) >0,
$$
a contradiction.
\end{proof}

The only  cones  in $\R$ that are not transversal are  $[0,+\infty[$, $]-\infty,0]$, so  transversality in $\R$ is characterized by a sign condition. The result below generalizes this fact  by stating that {\it transversality of two cones ${\mathbf{K}}_1$, ${\mathbf{K}}_2$ coincides with their  linear separability}.
\begin{defin} Two cones  $\mathbf{K}_1$, $\mathbf{K}_2\subseteq X$  are {\rm linearly separable} if there exists a  linear form $
p\in X^*\backslash \{0\}$ such that $$  p \cdot k_1\geq 0 \  \ \forall  k_1\in {\mathbf{K}}_1, \quad 
p \cdot k_2\leq 0 \ \ \forall k_2\in {\mathbf{K}}_2. $$ 

\end{defin}

\begin{prop}\label{teo3}  Let   ${\mathbf{K}}_1,{\mathbf{K}}_2\subseteq X$ be convex cones.  The following condititions are equivalent:
\begin{itemize}
\item ${\mathbf{K}}_1$ and ${\mathbf{K}}_2$ are not transversal; 
\item ${\mathbf{K}}_1$ and ${\mathbf{K}}_2$ are {\rm linearly separable}.
\end{itemize}
\end{prop}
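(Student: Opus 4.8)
The plan is to reduce the whole equivalence to a single separation fact about the convex cone $C := \mathbf{K}_1 - \mathbf{K}_2$, which by Definition \ref{trancon} equals $X$ precisely when the two cones are transversal. First I would record two elementary observations. Since $\mathbf{K}_1$ and $-\mathbf{K}_2$ are convex cones, their Minkowski sum $C=\mathbf{K}_1-\mathbf{K}_2$ is again a convex cone, and it contains both $\mathbf{K}_1$ and $-\mathbf{K}_2$ because $0\in\mathbf{K}_1\cap\mathbf{K}_2$. Next, linear separability of $\mathbf{K}_1,\mathbf{K}_2$ is equivalent to the existence of $p\in X^*\setminus\{0\}$ with $p\cdot w\geq 0$ for every $w\in C$: indeed $p\cdot k_1\geq 0\geq p\cdot k_2$ yields $p\cdot(k_1-k_2)\geq 0$, and conversely inserting $k_2=0$ and then $k_1=0$ (both legitimate since the cones contain the origin) recovers the two sign conditions. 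In the notation of Definition \ref{polar}, this says $-p\in C^{\bot}$, so the whole Proposition is equivalent to the clean statement $C\neq X\iff C^{\bot}\neq\{0\}$.

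The implication \emph{linearly separable $\Rightarrow$ not transversal} is then immediate: if $p\neq 0$ satisfies $p\cdot w\geq 0$ on $C$, the nonempty open half-space $\{x\in X:\ p\cdot x<0\}$ is disjoint from $C$, hence $C\neq X$ and the two cones fail to be transversal.

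For the converse, which I expect to be the crux, suppose the cones are not transversal, i.e. $C\neq X$. The delicate point is that $C$, being a Minkowski difference of cones, \emph{need not be closed}, so I cannot separate a point from $C$ directly. I would circumvent this with the finite-dimensional convexity fact that a proper convex set has proper closure, giving $\overline{C}\neq X$. Then I pick $x_0\notin\overline{C}$ and apply the separating-hyperplane theorem for a point and a closed convex set, obtaining $p\neq 0$ and $\alpha\in\R$ with $p\cdot x_0>\alpha\geq p\cdot w$ for all $w\in\overline{C}\supseteq C$. Because $C$ is a cone, the bound $\sup_{w\in C}p\cdot w\leq\alpha<\infty$ forces $p\cdot w\leq 0$ for all $w\in C$: otherwise rescaling some $w$ with $p\cdot w>0$ by large positive factors would make $p\cdot w$ unbounded. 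Hence $q:=-p\neq 0$ satisfies $q\cdot w\geq 0$ on $C$, which by the reformulation above is exactly linear separability.

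The main obstacle is therefore the possible non-closedness of $C=\mathbf{K}_1-\mathbf{K}_2$, which I handle through the lemma: \emph{in a finite-dimensional space a convex set whose closure is the whole space is itself the whole space}. I would justify it by noting that if $\overline{C}=X$ then the affine hull of $C$, being a closed affine subspace containing $\overline{C}$, is all of $X$; consequently the relative interior of $C$ coincides with $\mathrm{int}(C)$ and is nonempty, and combining this with the standard identity $\mathrm{int}(\overline{C})=\mathrm{int}(C)$ for convex sets gives $\mathrm{int}(C)=\mathrm{int}(X)=X$, whence $C=X$, contradicting $C\neq X$. An alternative that avoids passing to $\overline{C}$ is to apply the proper-separation theorem directly to the point $x_0\in X\setminus C\subseteq X\setminus\mathrm{ri}(C)$, which produces the same nonzero $p$ with $\sup_{C}p\cdot w<\infty$, hence $p\in C^{\bot}\setminus\{0\}$; either route closes the argument.
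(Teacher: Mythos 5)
Your proof is correct and follows essentially the same route as the paper: both arguments reduce the statement to the convexity of $\mathbf{K}_1-\mathbf{K}_2$ and the existence of a nonzero linear form nonnegative on that cone, with the converse handled by the observation that such a form on all of $X$ must vanish. The only difference is that the paper relegates the existence of the separating form for a proper convex set to a footnote declared ``trivial to verify,'' whereas you correctly identify and resolve the one genuinely delicate point there, namely that $\mathbf{K}_1-\mathbf{K}_2$ need not be closed, via the finite-dimensional fact that a proper convex set has proper closure.
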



{\it Proof.}  Assume that ${\mathbf{K}}_1$, ${\mathbf{K}}_2$ are not transversal, namely ${\mathbf{K}}_1- {\mathbf{K}}_2\not =X$. Notice that  ${\mathbf{K}}_1-{\mathbf{K}}_2$ is convex,
for ${\mathbf{K}}_1$ and $-{\mathbf{K}}_2$ are convex subsets. Hence there exists  a linear form $p\neq 0 $ such that
$ p \cdot k\geq 0$ for all $ k\in {\mathbf{K}}_1-{\mathbf{K}}_2$.\footnote{It is trivial to verify that  if ${\mathbf{K}}$ is a convex set different from $X$ then ${\mathbf{K}}^{\perp}$ is a non zero convex cone.} If we take $k\in {\mathbf{K}}_1$ (i.e. $k_2=0$), then $p\cdot k\geq 0$, while if we take    $k\in - {\mathbf{K}}_2$  (i.e. $k_1=0$) we obtain $p\cdot k\leq 0$.  Conversely, let   ${\mathbf{K}}_1$, $ {\mathbf{K}}_2$ be transversal,  so that   ${\mathbf{K}}_1- {\mathbf{K}}_2 = X$.  If $ p \cdot k_1\geq 0$ for all $ k_1\in {\mathbf{K}}_1$ and $ p \cdot k_2\leq 0$ for all $ k_2\in {\mathbf{K}}_2$,
then  $p \cdot k\geq 0$ for all  $ k\in {\mathbf{K}}_1-{\mathbf{K}}_2 =X$, which implies $p = 0$.

\section{Set separation and minima
}

%

\subsection{Directional Open Mapping results}

Roughly speaking   an {\it Open Mapping } result consists in  the possibility of deducing that, given a map $F:X\to Y$, the local image $F(U)$ of a neighbourhood $U$ of  $x\in X$ of is enough `thick', meaning that  it contains an open subset of $Y$. 
%
Actually, in Theorem \ref{directional}  we will deal with a {\it directional} generalization of the classical Open Mapping theorem.

\vskip0.7truecm

From the classical Inverse Map Theorem one obtains the following Open Mapping result:
\begin{theorem}[{\bf Open Mapping}]\label{omt}Let $n,m$ be positive integers, $n\leq m$, and consider a point  $\hat x\in\mathbb{R}^m$ and a real number $\delta>0$. Let $F:\hat x+B_{m}(\delta) \to\mathbb{R}^n$ be  a $C^0$ function, differentiable at $\hat x$.

	If the differential {$dF(\hat x)$}
	is surjective,
	then there exists $r>0$ such that 
	$$
	F(\hat x+B_{m}(\delta) )\supseteq {F(\hat x)+B_n(r).}  $$
	{In this case,} one says that {\rm $F$ is open at $\hat x$.
}\end{theorem}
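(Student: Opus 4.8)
The plan is to reduce the statement to a map whose differential at the base point is the identity, and then to extract surjectivity onto a small ball by a fixed-point argument (the honest substitute for the Inverse Map Theorem, given that $F$ is differentiable only at the single point $\hat x$). First I would normalize: replacing $F$ by $x\mapsto F(\hat x + x) - F(\hat x)$, there is no loss of generality in assuming $\hat x = 0$ and $F(0)=0$. Write $L := dF(0)$, which by hypothesis is a surjective linear map $\R^m\to\R^n$.

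Since $L$ is onto and $n\le m$, I would choose a linear right inverse $M:\R^n\to\R^m$ with $LM = \mathrm{Id}_{\R^n}$ (e.g.\ $M = L^{\top}(LL^{\top})^{-1}$); note $M\neq 0$ because $LM\neq 0$, so $\|M\|>0$. Setting $\rho := \delta/\|M\|$ gives $M\,B_n(\rho)\subseteq B_m(\delta)$, since $|Mk|\le\|M\|\,|k|\le\delta$ for $|k|\le\rho$. I then form the composition $G := F\circ M : B_n(\rho)\to\R^n$, which is continuous with $G(0)=0$ and, being the composite of the linear map $M$ with a map differentiable at $0$, is itself differentiable at $0$ with $dG(0) = L\,M = \mathrm{Id}_{\R^n}$. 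Openness of $G$ at $0$ then forces openness of $F$: if $G(B_n(\rho))\supseteq B_n(r)$, then $F(B_m(\delta))\supseteq F(M\,B_n(\rho)) = G(B_n(\rho))\supseteq B_n(r)$.

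It therefore suffices to prove that a continuous $G$ with $G(0)=0$ and $dG(0)=\mathrm{Id}$ is open at $0$. Writing $G(x) = x + \varphi(x)$, differentiability at $0$ gives $\varphi(x) = o(|x|)$, so there is $\rho_0\in(0,\rho]$ with $|\varphi(x)|\le \tfrac12|x|$ for $|x|\le\rho_0$. Fix any $y$ with $|y|\le \rho_0/2$ and consider the continuous map $T_y(x) := y - \varphi(x)$ on the closed ball $B_n(\rho_0)$; the estimate $|T_y(x)|\le |y| + |\varphi(x)|\le \rho_0/2 + \rho_0/2 = \rho_0$ shows $T_y$ sends $B_n(\rho_0)$ into itself. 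By Brouwer's fixed-point theorem $T_y$ has a fixed point $x^\ast$, i.e.\ $x^\ast + \varphi(x^\ast) = y$, which is exactly $G(x^\ast) = y$. Hence $G(B_n(\rho_0))\supseteq B_n(\rho_0/2)$, establishing openness with $r = \rho_0/2$.

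The main obstacle is precisely this last step: because $F$ (hence $G$) is differentiable only at a single point and is not assumed $C^1$ near it, the standard contraction-mapping proof of the Inverse Map Theorem does not apply, as the error term $\varphi$ need not be Lipschitz with small constant and so $T_y$ cannot be iterated to a fixed point. Replacing the contraction by Brouwer's theorem circumvents this, at the cost of yielding only surjectivity onto a neighborhood (openness) rather than a genuine local inverse. If instead the hypothesis is strengthened to $F\in C^1$ near $\hat x$, then $G$ is $C^1$ with $dG(0)=\mathrm{Id}$ invertible, and the classical Inverse Map Theorem applies verbatim to $G$, giving the same conclusion without any fixed-point theory.
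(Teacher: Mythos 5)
Your proof is correct, but it takes a genuinely different route from the one the paper gives for this theorem. The paper's proof selects $n$ coordinates so that the square submatrix $\frac{\partial F}{\partial \mathbf{x}^1}(\hat x)$ is nonsingular, freezes the remaining variables, and applies the classical Inverse Function Theorem to the slice map $\mathbf{x}^1\mapsto F(\mathbf{x}^1,\hat{\mathbf{x}}^2)$, obtaining a local diffeomorphism whose image is a neighbourhood of $F(\hat x)$. You instead compose $F$ with a linear right inverse $M$ of $L=dF(\hat x)$ to reduce to a map $G$ with $dG(0)=\mathrm{Id}$, and then get surjectivity onto a small ball via Brouwer's fixed-point theorem. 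Your route is essentially the one the paper itself uses for the more general Theorem \ref{directional}: your final step is exactly Lemma \ref{close} (with $R=\rho_0$, $\rho=\rho_0/2$), and the right-inverse construction parallels the pseudo-inverse $\Lambda$ built there. What your approach buys is fidelity to the stated hypotheses: the theorem assumes only that $F$ is continuous and differentiable at the single point $\hat x$, under which the Inverse Function Theorem is not applicable, so the paper's argument tacitly requires $F\in C^1$ near $\hat x$; your Brouwer-based argument needs no such strengthening. What the paper's approach buys, when the $C^1$ hypothesis is available, is slightly more, namely a genuine local inverse on the slice rather than mere surjectivity onto a neighbourhood. One could also note that your proof makes the theorem a literal special case of Theorem \ref{directional} (take $C=\R^m$, $v=0$), which is consistent with the paper's own Remark following that theorem.
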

\begin{proof}
	Up to re-ordering the components of $x=(x^1,\dots,x^m)$ and writing ${\mathbf x}^1$ and ${\mathbf x}^2$ in place of  $(x^1,\dots,x^n)$ and  $(x^{n+1},\dots,x^m) $, respectively, we can assume that 
	the matrix $\ds\frac{\partial F}{\partial{{\mathbf x^1}}}( \hat{\mathbf x}^1,\hat{\mathbf x}^2)$  is non singular.
	Therefore, by the Inverse Function Theorem, there exists $\eta\in]0,\delta]$ such that   the map
	$
	{\mathbf x^1}\mapsto F({\mathbf x^1},\hat{\mathbf x}^2)
	$ is a diffeomorphism from $\hat{\mathbf x}^1 +B_n(\eta)$ onto the open set  $F\big(\hat{\mathbf x}^1 +B_n(\eta),\hat{\mathbf x}^2\big)$.
	In particular,  $F\big(\hat{\mathbf x}^1 +B_n(\eta),\hat{\mathbf x}^2\big)$  is a neighbourhood of $F(\hat x) $, so
	we can choose  $r>0$ verifying $F\Big(\hat{\mathbf x}^1 +B_n(\eta),\hat{\mathbf x}^2\Big)\supseteq F(\hat x) +B_n(r)$. Hence
	$$
	F(\hat x+B_{m}(\delta) )\supseteq F\Big(\hat{\mathbf x}^1 + B_n(\bar
	\eta) \big),\hat{\mathbf x}^2\Big)\supseteq F(\hat x) +B_n(r),
	$$
	so the proof is concluded.

\end{proof}

This result is generalized in Theorem \ref{directional} below,
where the surjectivity of the differential is replaced by the assumption  that the image  of a conic domain via a   directional differential  has  nonempty interior. 

%
%

\begin{theorem}{\bf (Directional Open Mapping )}\label{directional}If   $n,m$ are  positive integers, $\hat x\in\R^m$,  $C\subseteq\R^m$ is a closed convex cone, and $\delta>0$, let  $$F:\hat x+C\cap B_{m}(\delta) \to\R^n$$ be  a  continuous function, differentiable at $\hat x$ in the direction of $C$   with differential $L:\R^m\to \R^n$.
Moreover, let us  assume  the existence of $v\in\R^n$ such that $v\in {\rm int}(L C).  \ \footnote{For any set $Y\subset \R^n$, we use    $\text{ co}\, Y$ and $int\,\, Y$ to denote the  convex  hull the  and interior of  $Y$, respectively. } 
$

Then,
there exist a convex cone $\Gamma\subseteq\R^n$ and $ \bar r>0$,  such that
$$
v\in {\rm int}\,\Gamma\quad\hbox{\rm and} \quad F\big(\hat x + C\cap B_{m}(\delta) \big) \supseteq  F(\hat x)+\Gamma\cap B_n({\bar r}).  $$

\end{theorem}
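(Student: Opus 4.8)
The plan is to reduce the statement to the classical topological fact that a continuous map which is a small uniform perturbation of an affine homeomorphism of a simplex is still surjective onto a slightly shrunk copy of the target, and then to let the size of the simplex shrink to zero so as to sweep out a full solid cone around $v$. After translating, we may assume $\hat x=0$ and $F(0)=0$, so that $F(c)=Lc+R(c)$ on $C\cap B_m(\delta)$ with $|R(c)|\le |c|\,\omega(|c|)$ and $\omega(\rho)\to 0$ as $\rho\to0^+$. Since $v\in{\rm int}(LC)$, I would first pick $n+1$ vectors $u_0,\dots,u_n\in LC$ forming a nondegenerate simplex $T:={\rm co}\{u_0,\dots,u_n\}$ with $v$ in its interior; writing $d_0:={\rm dist}(v,\partial T)>0$ and choosing the $u_i$ close enough to $v$ one also arranges $d_0<2|v|$ (the only case in which this cannot be done is $v=0$, which forces $LC=\R^n$, treated at the end). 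Lifting through $L$, choose $c_i\in C$ with $Lc_i=u_i$, and for $s\in(0,1]$ and $\lambda=(\lambda_0,\dots,\lambda_n)$ in the standard simplex $\Delta$ set $\phi_s(\lambda):=s\sum_i\lambda_i c_i$. Because $C$ is a convex cone and $B_m(\delta)$ is convex, $\phi_s(\Delta)\subseteq C\cap B_m(\delta)$ as soon as $s\,M\le\delta$ with $M:=\max_i|c_i|$.

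Next I would analyze the continuous maps $\psi_s:=F\circ\phi_s:\Delta\to\R^n$. Writing $\ell_0(\lambda):=\sum_i\lambda_i u_i$ for the affine homeomorphism of $\Delta$ onto $T$, one has $\psi_s(\lambda)=s\,\ell_0(\lambda)+R(\phi_s(\lambda))$ with the \emph{uniform} bound $|R(\phi_s(\lambda))|\le sM\,\omega(sM)$, valid for all $\lambda\in\Delta$. Hence the rescaled map $g_s:=\psi_s/s=\ell_0+R(\phi_s(\cdot))/s$ satisfies $\sup_{\Delta}|g_s-\ell_0|\le M\,\omega(sM)=:\varepsilon(s)\to0$ as $s\to0^+$. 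For every $p\in{\rm int}\,T$ with ${\rm dist}(p,\partial T)>\varepsilon(s)$ the straight-line homotopy $(1-\tau)\ell_0+\tau g_s$ never meets $p$ on $\partial\Delta$ (there $\ell_0(\lambda)\in\partial T$, so the homotopy stays at distance $\ge {\rm dist}(p,\partial T)-\varepsilon(s)>0$ from $p$); by homotopy invariance of the Brouwer degree, $\deg(g_s,{\rm int}\,\Delta,p)=\deg(\ell_0,{\rm int}\,\Delta,p)=\pm1\neq0$, so $p\in g_s(\Delta)$. Choosing $s_0>0$ with $\varepsilon(s)<d_0/2$ and $sM\le\delta$ for all $s\in(0,s_0]$, this gives $B_n(v,d_0/2)\subseteq g_s(\Delta)$, and therefore $\psi_s(\Delta)\supseteq s\,B_n(v,d_0/2)=B_n(sv,sd_0/2)$ for every such $s$.

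Finally I would assemble these scales. Since each $\psi_s(\Delta)\subseteq F(C\cap B_m(\delta))$, the image contains $\bigcup_{s\in(0,s_0]}B_n(sv,sd_0/2)$. I then set $\Gamma:={\rm cone}\big(B_n(v,d_0/2)\big)=\{sq:\ s\ge0,\ |q-v|<d_0/2\}$, which is a convex cone because the conic hull of a convex set is convex, and which has $v$ in its interior. It remains to produce $\bar r>0$ with $\Gamma\cap B_n(\bar r)$ contained in the swept set: given $\gamma\in\Gamma\setminus\{0\}$, the ray $\R_{>0}\gamma$ meets the ball $B_n(v,d_0/2)$, and, using $d_0/2<|v|$, its far intersection with the ball has norm $\tau_+\ge m_2:=\sqrt{|v|^2-(d_0/2)^2}>0$; hence $\gamma=sq$ for some $q\in B_n(v,d_0/2)$ with $s=|\gamma|/\tau_+\le|\gamma|/m_2$, so $s\le s_0$ whenever $|\gamma|\le\bar r:=s_0 m_2$. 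This yields $\Gamma\cap B_n(\bar r)\subseteq F(C\cap B_m(\delta))$, and restoring the translation gives $F(\hat x)+\Gamma\cap B_n(\bar r)\subseteq F\big(\hat x+C\cap B_m(\delta)\big)$, as claimed. In the remaining degenerate case $0\in{\rm int}(LC)$ one chooses $T$ with $0$ in its interior, so that the same scaling argument produces a full ball $F(\hat x)+B_n(\bar r)$ in the image and one may take $\Gamma=\R^n$.

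The main obstacle is precisely the passage from a single fixed simplex to the whole family indexed by $s$: because $F$ is differentiable only at the vertex and not in the interior of the simplex, one cannot invoke the smooth Open Mapping Theorem \ref{omt} on the maps $\psi_s$, and must instead control the remainder $R(\phi_s(\cdot))$ \emph{uniformly in} $\lambda$ and \emph{commensurately with} the simplex size $s$, so that the degree argument applies simultaneously at all small scales and the covered balls $B_n(sv,sd_0/2)$ glue together into a genuine solid cone. The final geometric step—turning a union of shrinking balls centered along the ray $\R_{\ge0}v$ into a conic neighborhood $\Gamma\cap B_n(\bar r)$—is where the condition $d_0/2<|v|$ (equivalently $0\notin\overline{B_n(v,d_0/2)}$) is used to keep the reciprocal scale $s$ bounded.
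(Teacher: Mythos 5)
Your proof is correct and follows essentially the same strategy as the paper's: select finitely many vectors of $LC$ whose positive combinations fill a solid neighbourhood of $v$, lift them to $C$, rescale by $s$, control the remainder uniformly at each scale, apply a Brouwer-type surjectivity statement, and take the union over $s\in]0,s_0]$ to assemble the cone $\Gamma$. The only substantive difference is the implementation of the topological step: the paper constructs a right pseudo-inverse $\Lambda$ of $L$ and applies the fixed-point covering lemma (Lemma \ref{close}) to $\Phi_s(y)=\frac1sF(\Lambda(sy))$ on a ball centred at $v$, whereas you parametrize by a simplex and invoke homotopy invariance of the Brouwer degree against the affine model $\ell_0$ --- two equivalent incarnations of Brouwer's theorem, both valid.
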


\begin{rem}{\rm 
Let us point out that  Theorem  \ref{omt} is recovered by setting 
$C=\R^m$ and $v=0$. Indeed, the differentiability in the direction of $\R^m$ coincides with the standard differentiability. Moreover,  the fact that  $0$ must be in the interior of the convex cone $\Gamma$  is equivalent to say  that   the latter coincides with   $\R^n$. 
}
\end{rem}
Let us prepose a technical lemma  to the proof of  Theorem \ref{directional}.

\begin{lemma}\label{close}
Let $n$ be  a positive integer. Let $\hat x\in \R^n$, $R>\rho>0$, and let $\phi:\hat x+B_n(R)\to\R^n$ be a continuous map     $\rho$-close to the identity, i.e.,$$|\phi(x)-x|\leq \rho,\quad \forall x\in \hat x +B_n(R).$$
Then,
\bel{picingr}
\phi(\hat x+B_n(R)\big)\supset	\hat x+B_n\big(R-\rho).
\end{equation}
\end{lemma}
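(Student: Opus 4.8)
The plan is to prove this via a topological degree (or Brouwer fixed-point) argument, which is the standard tool for showing that a perturbation of the identity remains surjective onto a slightly shrunken ball. Without loss of generality I would take $\hat x = 0$, since translating the problem changes nothing. The goal is then to show that for every target point $y \in B_n(R-\rho)$, there exists some $x \in B_n(R)$ with $\phi(x) = y$.

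\medskip

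\noindent\textbf{Main argument via Brouwer's fixed-point theorem.} Fix a target $y$ with $|y| \leq R - \rho$. I would look for a solution of $\phi(x) = y$ by rewriting it as a fixed-point equation. Define the auxiliary map
$$
\Psi(x) := x - \phi(x) + y.
$$
A point $x$ satisfies $\Psi(x) = x$ if and only if $\phi(x) = y$, so it suffices to find a fixed point of $\Psi$. Now I would check that $\Psi$ maps the closed ball $B_n(R)$ into itself: for any $x \in B_n(R)$,
$$
|\Psi(x)| = |x - \phi(x) + y| \leq |x - \phi(x)| + |y| \leq \rho + (R-\rho) = R,
$$
using the $\rho$-closeness hypothesis $|\phi(x)-x|\leq\rho$ and $|y|\le R-\rho$. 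Thus $\Psi:B_n(R)\to B_n(R)$ is a continuous self-map of a compact convex set, so Brouwer's fixed-point theorem yields a fixed point $x_\star \in B_n(R)$, which gives $\phi(x_\star) = y$. Since $y$ was an arbitrary point of $B_n(R-\rho)$, this establishes the claimed inclusion \eqref{picingr}.

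\medskip

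\noindent\textbf{Where the difficulty lies.} The genuinely essential ingredient is the fixed-point theorem (equivalently, a nonzero-degree / no-retraction statement); everything else is a one-line estimate. Conceptually, the only thing one must take care of is that $\phi$ need not be injective and need not be differentiable, so a naive inverse-function-theorem approach is unavailable — this is precisely why a degree-theoretic or Brouwer argument is the right level of generality. An alternative formulation I could give, which avoids invoking Brouwer directly, is a topological-degree computation: the homotopy $H(x,t) = x - t\bigl(\phi(x)-x\bigr) - y$ joining $\phi(x)-y$ (at $t=1$) to the shifted identity $x-y$ (at $t=0$) has no zero on the boundary sphere $\partial B_n(R)$, because for $|x|=R$ one estimates $|H(x,t)| \geq |x| - t|\phi(x)-x| - |y| \geq R - \rho - (R-\rho) $, and a strict inequality on the boundary follows by taking $|y|<R-\rho$ first and then passing to the closed ball by a limiting argument; homotopy invariance of the degree then gives $\deg(\phi(\cdot)-y, B_n(R), 0) = \deg(\mathrm{id}-y, B_n(R),0) = 1 \neq 0$, whence a solution exists. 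Either route works; I would present the Brouwer version since it is shortest and self-contained.
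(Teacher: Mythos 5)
Your proof is correct and is essentially identical to the paper's: the paper also reduces to $\hat x=0$, defines the auxiliary map $f_y(x):=x-\phi(x)+y$, checks it is a self-map of $B_n(R)$ by the same one-line estimate, and applies Brouwer's fixed-point theorem. The degree-theoretic variant you sketch is a fine alternative but is not needed.
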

\begin{proof} Up to the translation $x\to x-\hat x$ it is not restrictive to assume that $\hat x=0$.
Let us fix a point $y\in B_n(R-\rho) $ and let us define a map \linebreak $f_y:B_n(R)\to \R^n$  by letting
$$
f_y(x):=x - \phi(x) +y\quad \forall x \in B_n(R).
$$
By
$$
\begin{array}{c}
|f_y(x)|= |x - \phi(x) +y|\leq  |x - \phi(x)| +|y|\\
\leq	\rho + R-\rho = R 
\end{array}
$$
we deduce that $f_y (B_n(R)) \subseteq B_n(R)$. Since $f_y$ is continuous, by Brower Fixed Point Theorem we deduce the existence of a $x_y\in B_n(R)$ such that 
$$
x_y = f_y(x_y) = x_y - \phi(x_y) +y,
$$
hence  $y=\phi(x_y)\in  \phi(B_n(R))$. By the arbitrariness of $y\in \bar x+B_n(R-\rho) $ we get the thesis.
\end{proof}
\begin{proof}[Proof of Theorem \ref{directional}]
It is clearly  not restrictive to   assume that $\hat x=0$, $F(\hat x)=0$, so that the thesis reads: {\it there exist a convex cone $\Gamma\subseteq\R^n$ and $ \bar r>0$,  such that
$$
v\in {\rm int}(\Gamma)\quad\hbox{\rm and} \quad F\big(C\cap(B_{m}(\delta)) \big)\supseteq  \Gamma\cap \big(B_n(\bar r)\big).  $$}

If $v=0$, which implies  $LC=\R^n$, let us set \bel{basis1}(v_{1,r},\ldots, v_{n,r}):= r(e_1,\ldots,e_n)\qquad r>0,\end{equation} where $(e_1,\ldots,e_n)$ is the canonical basis of $\R^n$. 
If instead $v\neq 0$, let $Z\subset\R^n$ be an $(n-1)$-dimensional  subspace  such that 
$
Z + \R v =\R^n $ (which implies $Z\cap
\R v  = \{0\}.$)
Let $(z_1,\dots,z_{n-1})$ be a basis for $Z$ and define $z_n:= -z_1-z_2-\ldots-z_{n-1}$.
In this case,  we set \bel{basis2} 
(v_{1,r},\ldots, v_{n,r}):=(v+rz_1,\ldots, v+rz_n).
\end{equation}

We claim that in both cases the $n$ vectors  {\it $(v_{1,r},\ldots, v_{n,r})$  form a basis of $\R^n$.} 
Indeed, this  is straightforward for \eqref{basis1}. As for the  case \eqref{basis2},
let $a_1,\ldots,a_n$ be arbitrary real numbers such that 
$$\begin{array}{l} 0= a_1v_{1,r}+\ldots+a_n v_{n,r} = a_1(v+rz_1)+\ldots+a_n (v+rz_n)  \\ \ \ \,=
(a_1+\ldots+ a_n)v + r(a_1z_1+\dots a_nz_n).\end{array}$$
Then, by $v\notin Z$ and $a_1z_1+\dots a_nz_n\in Z$, one gets
$$
a_1+\ldots+ a_n=0,\quad 0=a_1z_1+\dots a_nz_n=(a_1-a_n)z_1+\dots+ (a_{n-1}-a_n)z_{n-1}=0,$$
which, since the vectors $z_1,\dots,z_{n-1}$ are linearly independent, implies 
$(a_1-a_n)=\ldots, (a_{n-1}-a_n) = 0$. Since $a_1+\ldots+ a_n=0$, this gives
$$a_1=\dots=a_{n-1}=a_n =0 ,$$
therefore the vectors $v_{1,r},\ldots, v_{n,r}$ are linearly independent, i.e. they form 
a basis of $\R^n$.
\begin{figure}[h]
\centering
\includegraphics[scale=0.40]{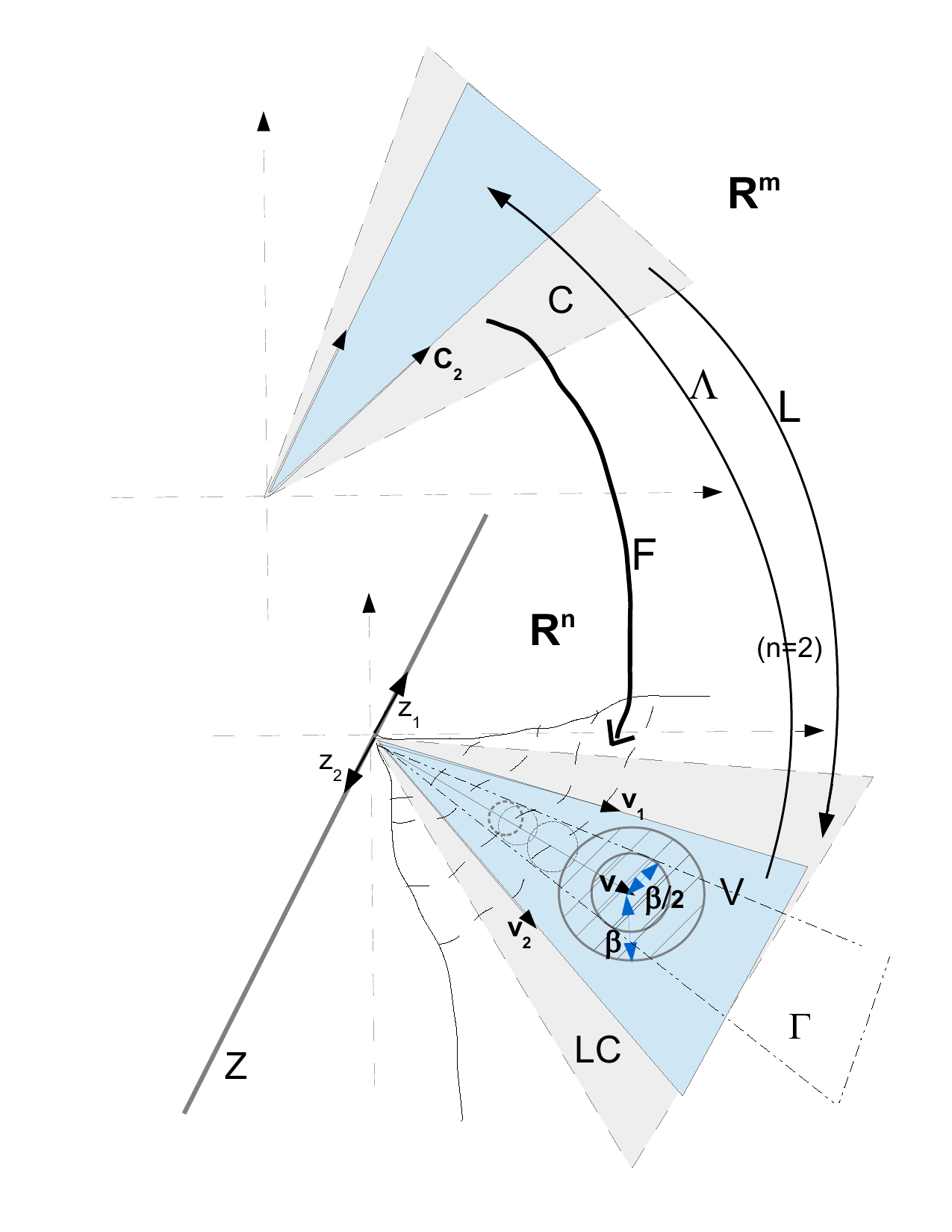}
\caption{Proof of the directional Open Mapping ($n=2$)}
\end{figure}
Now, pick $\alpha>0$ such that $v_{i,\alpha}\in L C
$ for every $i=1,\ldots,n$,\footnote{Such an  $\alpha$ does exists, for $v\in int\, LC$} and set
$\hat v_i:=v_{i,\alpha}$. Since $\hat v_i\in L C$ for every $i=1,\ldots,n$, let us choose $c_1,\ldots,c_n\in C$ such that $\hat v_i = Lc_i$,  for all $i=1,\ldots,n$.
Finally, let us define  the $m\times n$ matrix $\Lambda$  by setting $$
\Lambda \hat v_i= c_i, \qquad\forall i=1,\ldots,n.\quad \footnote{This is a good definition, for $v_1,\ldots,v_n$ is a basis.}$$ 
The matrix $\Lambda$ is    a  right {\it pseudo-inverse  of $L$}, i.e.  it verifies
\bel{pseudoi}
L\cdot \Lambda = id_{\R^m}.
\end{equation}

Let us define the closed convex cone $V:= \text{\rm span}^+\{\hat v_1,\ldots,\hat v_n\}$  ---so that $\Lambda V \subset C$--- and let us 
observe that $\ds\frac 1n\sum_{i=1}^n  \hat v_i = v$, which implies  $v\in int \, V$. In particular $int \, V\neq \emptyset$.  
Therefore there exists, $\beta>0$ such that 
$v+B_n(\beta)\subset V$. Notice that,  for every $s>0$ sufficiently small, the map
\bel{map1}
\Phi_s: v+B_n(\beta) \to\R^n, \quad \Phi_s(y):=\frac 1s F(\Lambda(sy))
\end{equation}
is well defined. Indeed, we already know that, for all $s>0$, one has \linebreak $\Lambda(s(v+B_n(\beta)))\subset C$, so that   $\Phi_s$ is well defined if and only if 
\bel{dentro1} \Lambda\big[s(v+B_n(\beta))\big]\subseteq B_n(\delta).\end{equation}
Now, if  $y\in s(v+B_n(\beta))$, one has  $|y|\leq s(|v| + \beta)$, so that, for all $x\in \Lambda\big[s(v+B_n(\beta))\big]$,  one gets  $|x|\leq 
s 
|\Lambda|(|v| + \beta)$. Therefore \eqref{dentro1} is verified  as soon as 
$$0<s\leq s_{\star}:=  \frac{\delta}{|\Lambda|(|v| + \beta)},$$
so the maps $\Phi_s$ are well defined for all $s\in]0,s_{\star}]$. 
Furthermore, 
below we will prove  the following fact: 
\vskip0.5truecm
{\bf Fact 1}. {\it There exists $\bar s \in ]0,s_{\star}]$ such that, for every $s\in]0,\bar s]$,  the map 
$\Phi_s: v+B_n(\beta)\to\R^n$ 
is a continuous  map,    $\frac{\beta}{2}$-close to the identity map.}
\vskip0.5truecm

In view of  {\bf Fact 1}, Lemma  \ref{close} tells us that, 
for every $s\in]0,\bar s]$,
$$
\big(v+B_n(\beta/2)\big)\cap V = v+B_n(\beta/2) \subseteq\Phi_s\big(v+B_n(\beta)\big)
=\Phi_s\Big(\big((v+B_n(\beta)\big)\cap V\Big) ,
$$
namely, for every $y\in v+B_n(\beta/2)\,\,  (\subset V)$  and every $s\in]0,\bar s]$,   there exists $\tilde y\in  v+B_n(\beta)\,\,  (\subset V)$
such that $y=\Phi_s(\tilde y) $. Hence  $$sy=s\Phi_s(\tilde y) =s \frac 1s F(\Lambda(s\tilde y)) \in  F(C\cap B_m(\delta)) . $$
Therefore
$$
s(v+B_n(\beta/2))\subseteq F(C\cap B_m(\delta)) , \qquad \forall s\in]0,\bar s].$$ Now  observe that $0\in  F(C\cap B_m(\delta))$  (since $F(0)=0$) and  the  set 
$$\Gamma: =\underset{s\geq 0}{\bigcup} s(v+B_n(\beta/2))$$
is a closed convex cone.
Clearly there exists $\bar r>0$ such that 
$$
B_n({\bar r})\cap \Gamma \subset \{0\}\underset{{s\in [0,\bar s]}}\bigcup s(v+B_n(\beta/2)) \subset F(C\cap B_m(\delta)) 
$$
so that the proof of Theorem \ref{directional} is concluded. It remains to prove  {\bf Fact 1}.

\vskip0.5truecm
{\bf Proof of  {\bf Fact 1}.}
Define the { \it error map } $\eta: v+B_n(\beta) \to\R^n$ by letting 
$$
\eta(y)= F(\Lambda y)-L\Lambda y\quad \forall y\in  v+B_n(\beta),
$$and let the map $\Psi:\R^{\ge0}\to\R^{\ge0}$ be defined by setting $\Psi(0)=0$ and
$$
\qquad \Psi(\gamma)= \sup \left\{\frac{|\eta(y)|}{|y|},\  |y| \leq \gamma\right\}\quad\forall\mathcal{\gamma} >0.
$$
Since $\eta(y)= o(y)$,  one has
$
\ds\lim_{\mathcal{\gamma}\to 0} \Psi(\mathcal{\gamma})= 0.
$
By the relation $L\Lambda = id_{\R^n}$, one gets 
\bel{close2}\begin{array}{c}
\left|y- \Phi_s(y)\right| =\left| y -\ds\frac 1sF(\Lambda(sy))\right|
=\\\left|y \ds-\frac 1s(L\Lambda(sy))\right| + \left|\ds\frac 1s(L\Lambda(sy)) -\ds\frac 1sF(\Lambda(sy))\right|    
\\
\leq  |y-y |+ \left|\ds\frac 1s \eta(sy)\right| = |y| \left|\ds\frac{1}{|sy|} \eta(sy)\right| \leq (|v|+\beta) \Psi( s (|v|+\beta))
\end{array} \end{equation}
so that 
$$
\left|y- \Phi_s(y)\right| \leq \frac\beta2
$$
as soon as $s\in [0,s_1]$, with $s_1$ such that $(|v|+\beta) \Psi( s (|v|+\beta))\leq \frac\beta2$ for all $s\in [0,s_1]$.
So {\bf Fact 1} is proved, which also concludes the proof of the theorem.
\end{proof}

\subsubsection{Fermat rule by set separation}
 Let us regard   Fermat's Theorem, namely  the main necessary condition for a local minimum of a smooth function $\Psi:\Omega \to\R $ defined on an open subset $\Omega \subseteq\R^n$,  as a result a result of suitable set separation.
Let us recall that a point $\bar x\in\Omega$ is a local minimum for $\Psi$ is there exists a $\delta>0$
such that $\bar x$ is a global minimum point for  the restriction $\Psi_{{|_{\bar x+B_n(\delta)}}}$, i.e. $\Psi(\bar x)\leq \Psi(x)$ for every $x\in\bar x+ B_n(\delta)$.  
\begin{theorem}[Fermat]
	If $\bar x\in\Omega $ is a local minimum point for
	$\Psi$,  then $\nabla\Psi(\bar x) = 0$.
\end{theorem}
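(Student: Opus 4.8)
The plan is to read Fermat's condition as a set-separation statement and to contradict the separation by the Open Mapping theorem. Local minimality of $\bar x$ says precisely that the local image $\Psi\big(\bar x + B_n(\delta)\big)$ is contained in the closed half-line $[\Psi(\bar x),+\infty)$, equivalently that it is disjoint from the open half-line $(-\infty,\Psi(\bar x))$. So the two ``sets'' being separated are the image of $\Psi$ near $\bar x$ and the open set of sub-minimal values. I would argue by contradiction: assuming $\nabla\Psi(\bar x)\neq 0$, the image is forced to meet the forbidden half-line.

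First I would regard $\Psi$ as a map $F:=\Psi:\bar x + B_n(\delta)\to\R$, which is continuous and differentiable at $\bar x$ with differential $dF(\bar x)=\nabla\Psi(\bar x)$. If $\nabla\Psi(\bar x)\neq 0$, then this differential is a nonzero linear form, hence a surjection onto $\R$. Thus the hypotheses of the Open Mapping theorem (Theorem \ref{omt}) are met, upon identifying its source dimension $m$ with $n$ and its target dimension with $1$; the index requirement ``target dimension $\leq$ source dimension'' becomes $1\leq n$, which always holds.

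Next I would invoke Theorem \ref{omt} to conclude that $F$ is open at $\bar x$: there exists $r>0$ with $\Psi\big(\bar x + B_n(\delta)\big)\supseteq \Psi(\bar x)+B_1(r)=\big(\Psi(\bar x)-r,\Psi(\bar x)+r\big)$. In particular the image contains values strictly smaller than $\Psi(\bar x)$, so it meets the open half-line $(-\infty,\Psi(\bar x))$. This violates the separation recorded above, i.e.\ it contradicts the local minimality of $\bar x$. Therefore $\nabla\Psi(\bar x)=0$.

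As for the main difficulty, there is no genuine analytic obstacle, since the heavy lifting is done by the Open Mapping theorem (itself a consequence of the Inverse Function Theorem). The only points demanding care are the bookkeeping of dimensions when matching the scalar problem to the statement of Theorem \ref{omt}, and the translation of ``local minimum'' into the disjointness of the image from a half-line, so that the open-mapping conclusion delivers precisely the contradiction. One could equally phrase the same argument in the language of the previous section: when $L=dF(\bar x)\neq 0$ the approximating cone $L\R^n$ of the image equals $\R$, which is transversal (in the sense of Definition \ref{trancon}) to the descent cone $(-\infty,0]$, and transversality is exactly the obstruction to separation; but the bare Open Mapping theorem already suffices.
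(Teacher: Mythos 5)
Your proof is correct, but it takes a genuinely different route from the paper's. You argue directly on the scalar function: if $\nabla\Psi(\bar x)\neq 0$ then $d\Psi(\bar x):\R^n\to\R$ is surjective, Theorem \ref{omt} (with source dimension $n$ and target dimension $1$, so the index condition $1\leq n$ holds) makes $\Psi$ open at $\bar x$, and the image $\Psi(\bar x+B_n(\delta))\supseteq \Psi(\bar x)+B_1(r)$ then contains values strictly below $\Psi(\bar x)$, contradicting local minimality. This is clean, elementary, and needs only the classical Open Mapping theorem. The paper instead deliberately lifts the problem to $\R^{n+1}$: it encodes local minimality as the local separation of $graph(\Psi)$ from the profitable set $\mathcal{P}=\left(\Omega\times(-\infty,\Psi(\bar x))\right)\cup\{(\bar x,\Psi(\bar x))\}$, takes the tangent space $T_{(\bar x,\Psi(\bar x))}$ and the cone $\R^n\times(-\infty,0)$ as Boltyanski approximating cones, and invokes the set-separation result (Corollary \ref{moltiplicatori}) to produce a nonzero linear form $(p,p_c)$ that separates them; the multiplier identity $(p,p_c)=\alpha(-\nabla\Psi(\bar x),1)$ then forces $\nabla\Psi(\bar x)=0$. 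What your argument buys is brevity and minimal machinery; what the paper's argument buys is precisely the pedagogical point of the section, namely that Fermat's rule is the simplest instance of the separation-of-approximating-cones scheme that later yields Lagrange multipliers, Kuhn--Tucker, and the Abstract Maximum Principle. You gesture at this reformulation in your closing remark (the image cone $L\R^n=\R$ is transversal to the descent cone $(-\infty,0]$), which is indeed the same mechanism, though the paper runs it in the graph space rather than the value space. One small cosmetic point: $B_1(r)$ in the paper's convention is the closed interval $[-r,r]$, not the open one, but this does not affect your conclusion.
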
 

Let us begin with observing that we can equivalently express the notion of minimum local point in topological terms as a {\it local set-separation}. Let us define rigorously this notion:
\begin{defin} Let $X$ be a topological space  and let us consider two subsets $ {\mathcal A}_1$, $ {\mathcal A}_2\subseteq X$.  If $y\in  {\mathcal A}_1\cap  {\mathcal A}_2$, we say that $ {\mathcal A}_1$ and $ {\mathcal A}_2$ are {\em  locally separated} provided there exists a neighborhood $N$ of $y$ such that
	$$
	 {\mathcal A}_1\cap  {\mathcal A}_2\cap N = \{y\}
	$$
\end{defin}

If we  define  $graph(\Psi)\subset\R^{n+1}$
and the {\it profitable set }  $\mathcal{P}\subset\R^{n+1}$ as
$$graph(\Psi):= \{(x,\Psi(x)) \,\,\,x\in \Omega\}, \ \ \mathcal{P}=\left( \Omega\times (-\infty,\Psi(\bar x))\right)\bigcup \{(\bar x,\Psi(\bar x))\}$$
it is trivial to verify the following fact:
\vskip0,4truecm
\noindent{\bf Fact.}
{\it 
	$\bar x\in\Omega $ is a local minimum point for
	$\Psi$ if and only if the subsets  $graph(\Psi)$ and  $\mathcal{P}$ are locally separated at $(\bar x,\Psi(x))$   }
\vskip0,4truecm
So, finding necessary condition for a point to be of minimum is reduced to the problem of finding necessary conditions for two sets to be locally separated.  This will be done rigorously in the next section, but now we wish to illustrate the main idea. First, let us say that we expect that a necessary condition be given in terms of Boltyanski  approximating cones.
We will see in Corollary \ref{moltiplicatori} that the sought necessary condition is as follows:
\vskip0,4truecm
\noindent  {\bf A set-separation result} {(Corollary  \ref{moltiplicatori}).} \it Let $  {\mathcal A}_1,  {\mathcal A}_2\subset\R^q$ be locally separated at $x$ and let $ \mathbf{K}_1 $ and $ \mathbf{K}_2 $ be
		Boltyanski approximating cones at $x$ to  $ {\mathcal A}_1$ and $ {\mathcal A}_2$, respectively.
		If one of the cones 
		$ \mathbf{K}_1 $,  $\mathbf{K}_2 $ is not a linear subspace, then  $\mathbf{K}_1$ and $\mathbf{K}_2$ are {\rm linearly separable},
		namely, there exists a non-zero linear form $p\in (\R^q)^*$ such that $$ p\cdot v_1 \geq 0\,\,\,\forall v_1\in \mathbf{K}_1,\qquad p \cdot v_2 \leq 0\,\,\, \forall v_2\in \mathbf{K}_2.$$}

Let us apply this fact to our two locally separated sets,  $graph(\Psi)$ and the profitable set   $\mathcal{P}$. 
It is almost immediate to prove that the tangent space $T_{(\bar x,\Psi(\bar x))} $ to the manifold $graph(\Psi)$ is an approximating cone to $graph(\Psi)$ at $(\bar x,\Psi(\bar x))$, while the cone
$\R^n\times (-\infty,0)$ is an approximating cone to the profitable set $\mathcal{P}$  (at $(\bar x,\Psi(\bar x))$). Therefore in view of the above-stated necessary condition there must exist a  
non-zero linear form $(p,p_c)\in (\R^{n+1})^*$ such that \bel{notrav}\begin{array}{l}(p_1,p_c) \cdot(v,v^c) \geq 0\,\,\, \forall (v,v^c)\in \R^n\times (-\infty,0)\\(p_1,p_c)\cdot (w,w^c) \leq 0\,\,\,\forall  (w,w^c)\in T_{(\bar x,\Psi(\bar x))}.\end{array}\eeq 
Since $$ T_{(\bar x,\Psi(\bar x))} = span\left\{\left(\mathbf{e}_i, \frac{\partial\Psi}{\partial x^i}\right)\quad i=1,\ldots,n\right\},$$ 
is a vector space, the second relation in \eqref{notrav}  must be true as an equality, so that   that  $(p,p_c)=\alpha (-\nabla\Psi(\bar x),1)$, where $\alpha\neq 0$ can be chosen arbitrarily. For $\alpha=1$, the first relation in \eqref{notrav}  becomes
$$
(-\nabla\Psi(\bar x),1) (v,v^c) = -\nabla\Psi(\bar x) v+ v^c \geq 0\quad \forall v\in\R^n,  v^c \leq 0,
$$
which gives $-\nabla\Psi(\bar x) v\geq -v_c\geq 0,\,\,\, \forall v\in\R^n$, hence $\nabla\Psi(\bar x)=0$, which coincides  the thesis of Fermat's theorem.

We will see that also Lagrange multipliers Theorem and Kuhn Tucker Theorems can be deduced  by the above set-separation argument. Actually, the same argument will be used to prove an Abstract Maximum Principle,
 which in turn  is crucial to prove a quite general version of the Pontryagin Maximum Principle.

\subsection[Set separation and cone separability]{Set separation and  separability of approximating cones}

 The Directional Open Mapping theorem  (Theorem \ref{directional}) allows us to prove  Theorem \ref{teoteo} below, which establishes a relation between local set separation of two sets  ${\mathcal A}_1, {\mathcal A}_2$ at $x\in {\mathcal A}_1\cap {\mathcal A}_2$ and the linear separability of cones ${\mathbf{K}}_1,{\mathbf{K}}_2$, provided ${\mathbf{K}}_i$ is an  approximating cone of ${\mathcal A}_i$, $i=1,2$, at $x$.  In turn,  Theorem  \ref{teoteo} may be regarded as  the cornerstone of the proof of the Abstract Maximum Principle, where the locally separated sets 
coincide with the  `profitable set' and to  the `augmented reachable set' (see \eqref{PT}-\eqref{ARS}).

\begin{defin} Let $X$ be a topological space , and let us consider two subsets ${\mathcal A}_1$, ${\mathcal A}_2\subseteq X$.  If $y\in {\mathcal A}_1\cap {\mathcal A}_2$, we say that ${\mathcal A}_1$ and ${\mathcal A}_2$ are {\em  locally separated} provided there exists a neighborhood $N$ of $y$ such that
$$
{\mathcal A}_1\cap {\mathcal A}_2\cap N = \{y\}
$$
\end{defin}

%
%
%

\begin{theorem}\label{teoteo}
Let $ {\mathcal A}_1 $ and $ {\mathcal A}_2 $ be  subsets of $ \R^{n} $, $ x\in {\mathcal A}_1\cap {\mathcal A}_2 $, 
and $ {\mathbf{K}}_1$, ${\mathbf{K}}_2$ be  approximating cones of ${\mathcal A}_1$ and ${\mathcal A}_2$ at $x$, respectively. If  ${\mathbf{K}}_1$ and ${\mathbf{K}}_2$ are strongly transversal,
then  the sets ${\mathcal A}_1,{\mathcal A}_2$
are not locally separated. 
\end{theorem}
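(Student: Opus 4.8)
The plan is to recast non-separation as a \emph{coincidence problem} $F_1(k_1)=F_2(k_2)$ and to solve it via the Directional Open Mapping theorem (Theorem \ref{directional}), after augmenting the target space by one coordinate whose role is to force the common value to differ from $x$. First I would use Proposition \ref{eqdefappcone} to normalize the approximating-cone data: there exist $\varepsilon_1,\varepsilon_2>0$ and continuous maps $F_i:\mathbf{K}_i\cap B_n(\varepsilon_i)\to{\mathcal A}_i$ with $F_i(0)=x$ and $F_i(k)=x+k+o(k)$, $i=1,2$. By Definition \ref{trancon}, strong transversality furnishes two ingredients that I will exploit separately: transversality, i.e. $\mathbf{K}_1-\mathbf{K}_2=\R^n$, and a nonzero vector $\bar k\in(\mathbf{K}_1\cap\mathbf{K}_2)\setminus\{0\}$, together with a linear form $q\in(\R^n)^*$ normalized by $q\cdot\bar k=1$.

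Next I would work on the product cone $C:=\mathbf{K}_1\times\mathbf{K}_2\subseteq\R^{2n}$ with the augmented map and its directional differential
$$\widetilde F(k_1,k_2):=\big(F_1(k_1)-F_2(k_2),\ q\cdot k_2\big)\in\R^{n+1},\qquad \widetilde L(h_1,h_2)=\big(h_1-h_2,\ q\cdot h_2\big),$$
noting $\widetilde F(0)=0$ and that $\widetilde F$ is differentiable at $0$ in the direction of $C$ with differential $\widetilde L$ (the first block has the required $o(\cdot)$ expansion, the second is linear). The crucial point is that $v:=(0,1)\in{\rm int}(\widetilde L\,C)$. Here $(0,1)=\widetilde L(\bar k,\bar k)\in\widetilde L\,C$ precisely because $\mathbf{K}_1\cap\mathbf{K}_2\neq\{0\}$. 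To get interiority I would inspect the polar cone (Definition \ref{polar}): any $(\zeta,\theta)\in(\widetilde L\,C)^\bot$ satisfies $\zeta\cdot(h_1-h_2)+\theta\,(q\cdot h_2)\le0$ for all $h_i\in\mathbf{K}_i$; evaluating at $h_1=h_2=\bar k$ gives $\theta\le0$, and if $\theta=0$ then $\zeta\cdot w\le0$ for every $w\in\mathbf{K}_1-\mathbf{K}_2=\R^n$, forcing $\zeta=0$. Hence every nonzero covector of $(\widetilde L\,C)^\bot$ pairs strictly negatively with $(0,1)$, which is exactly the statement that $(0,1)\in{\rm int}(\widetilde L\,C)$.

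With this in hand I would apply Theorem \ref{directional} to $\widetilde F$, obtaining a convex cone $\Gamma\subseteq\R^{n+1}$ and $\bar r>0$ with $(0,1)\in{\rm int}\,\Gamma$ and $\widetilde F\big(C\cap B_{2n}(\varepsilon)\big)\supseteq \Gamma\cap B_{n+1}(\bar r)$. Since $\Gamma$ is a cone containing $(0,1)$ in its interior, $(0,s)=s(0,1)\in\Gamma\cap B_{n+1}(\bar r)$ for all small $s>0$, so there is $(k_1^s,k_2^s)\in C\cap B_{2n}(\varepsilon)$ with $F_1(k_1^s)=F_2(k_2^s)=:y_s\in{\mathcal A}_1\cap{\mathcal A}_2$ and $q\cdot k_2^s=s>0$. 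The inequality $q\cdot k_2^s>0$ forces $k_2^s\neq0$; since $y_s=F_2(k_2^s)=x+k_2^s+o(k_2^s)$ with $|k_2^s|\le\varepsilon$, for $\varepsilon$ small the relation $y_s=x$ would give $|k_2^s|\le\frac12|k_2^s|$, so in fact $y_s\neq x$. Finally, given any neighborhood $N$ of $x$, I would fix $\varepsilon$ at the outset small enough that also $F_1(\mathbf{K}_1\cap B_n(\varepsilon))\subseteq N$ (possible by continuity and $F_1(0)=x$), so that $y_s\in N$. Thus ${\mathcal A}_1\cap{\mathcal A}_2\cap N\supsetneq\{x\}$ for every $N$, i.e. the sets are not locally separated.

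\textbf{Main obstacle.} Transversality alone readily yields coincidences $F_1(k_1)=F_2(k_2)$, but it does not rule out that the only such common value is the trivial one $y=x$ (as happens for two transverse lines crossing only at $x$). The delicate part is therefore to certify that the coincidence point is \emph{genuinely distinct} from $x$; this is what the extra coordinate $q\cdot k_2$ together with the nonzero common direction $\bar k\in\mathbf{K}_1\cap\mathbf{K}_2$ accomplishes, and it is exactly here that the second clause of strong transversality is indispensable.
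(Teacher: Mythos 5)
Your proof is correct and follows essentially the same route as the paper's: you build the augmented coincidence map $(k_1,k_2)\mapsto\big(F_1(k_1)-F_2(k_2),\ \ast\big)$ on the product cone, use the two halves of strong transversality to show $(0,1)$ is interior to the image of its directional differential, and invoke Theorem \ref{directional} to produce points of ${\mathcal A}_1\cap{\mathcal A}_2$ arbitrarily close to, yet distinct from, $x$. The only substantive variations are that you normalize via Proposition \ref{eqdefappcone}, you prove the interiority claim by polar-cone duality instead of the paper's explicit convex combination of the simplex directions ${\bf e}_0,\dots,{\bf e}_n$ (here you should add the one-line remark that strict negativity on $(\widetilde L\,C)^\bot\setminus\{0\}$ characterizes ${\rm int}\big(\overline{\widetilde L\,C}\big)$, which equals ${\rm int}(\widetilde L\,C)$ by convexity), and you take the second coordinate to be $q\cdot k_2$ rather than the paper's $\mu(F_1(c_1))$ -- which is why you need, and correctly supply, the extra half-line of argument to conclude $y_s\neq x$.
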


\begin{figure}[h]
\centering
\includegraphics[scale=0.30]{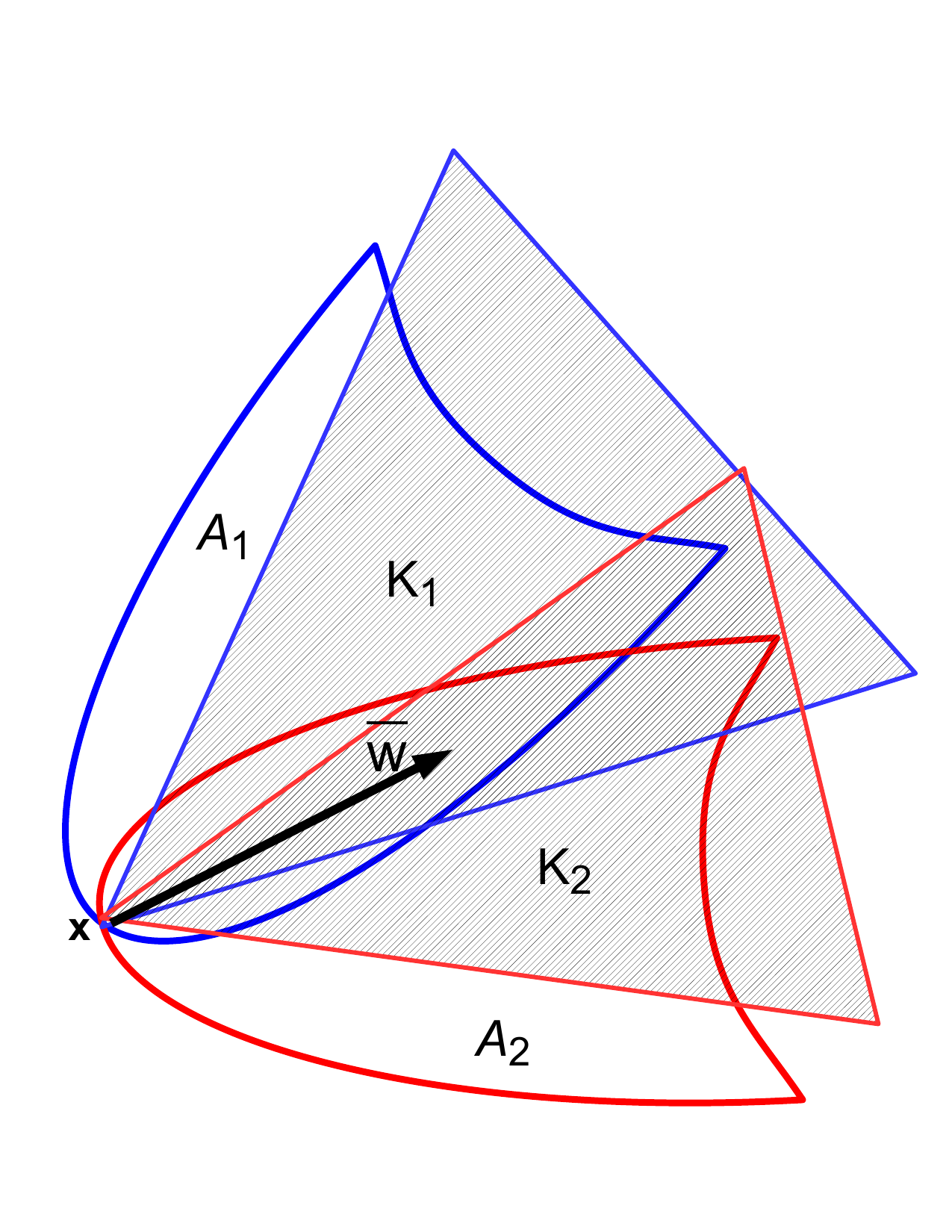}
\caption{If approximating cones at $x$ are strongly transversal the sets are not locally separated}
\end{figure}

\begin{proof} Without loss of generality we can assume $x=0$. Let $C_i, F_i,
\delta_i, L_i,m_i$, $i=1,2$, be as in the definition of Boltyanski approximating cone.   Namely, for  $i\in\{1,2\}$:\begin{itemize}

\item[ i)]  ${m_i}$ is a  positive integer and
$C_i\subset\R^{m_i}$ is a convex cone;
\item[ ii)] $L_i\in Hom(\R^{m_i},\R^n)$ and  $F_i:C_i\cap B_{m_i}(\delta_i)\to {\mathcal A}_i$ is a continuous map verifying 
$
F_i(c) = x + L_i c + o(c)$ for all {$c\in C_i\cap B_{m_i}(\delta_i)
$};
\item[ iii)] ${\mathbf{K}}_i=L_iC_i$ .
\end{itemize}

Since ${\mathbf{K}}_1$,${\mathbf{K}}_2$ are strongly transversal,  one has $ {\mathbf{K}}_1\cap {\mathbf{K}}_2\supsetneq\{0\} $. Pick  $ \bar{w}\in {\mathbf{K}}_1\cap {\mathbf{K}}_2 $ such that  $|\bar w|= 1$, and    let $
\mu :\R^n\to\R $ be  a linear function such that $ \mu (\bar{w})=1$.\footnote{For instance, if $\bar w = (\bar w^1,\ldots,\bar w^n)$,  one can  set  $\mu(v) := \sum_{i=1}^n \bar w^i v^i $, $\forall v=(v^1,\dots,v^n)\in\R^n$.}
 Let us
set 
\begin{eqnarray*}&C=C_1\times C_2\quad \delta:= \min\{\delta_1,\delta_2\}\\
 & {F: B_{m_1+m_2}(\delta)\cap C}\longrightarrow \R^{n+1}  \\
 & F(c_1,c_2):=\big(F_1(c_1)-F_2(c_2), \mu (F_1(c_1))\big).
\end{eqnarray*}
 The map  ${F}$  turns out to be differentiable  in the direction of $C$ at $(0,0)\in\R^{m_1}\times \R^{m_2}$,  with differential $L$ defined by setting 
$${L}(c_1,c_2):=(L_1c_1-L_2c_2, \mu (L_1c_1)) \quad\forall (c_1,c_2) \in C.$$
\vskip0.3truecm
 Below we shall prove the following fact:
\vskip0.3truecm
{\bf Claim 1}. {\it The vector $(0,1)\in \R^n\times \R$ belongs to { the interior of ${LC} $}.\footnote{Of course, this is equivalent to saying that $(0,\alpha)$ belongs to  the interior of ${LC} $ for every real number $\alpha>0$.}  
	}

\if{\it The vector $(0,1)\in \Omega\times \R$ belongs to {\it the interior of ${L(C)} $.}}\fi
\vskip0.5truecm

 In view of {\bf Claim 1},  the Directional Open Mapping Theorem (Theorem \ref{directional}) implies the existence of a convex cone $\hat C\subseteq\R^{n+1}$ and a real number $\bar\varepsilon>0$   such that  $(0,1)\in  int\,\,{\hat C}$ and $$\begin{array}{c} \hat C\cap B_{n+1}(\bar \varepsilon)\subseteq {F\big(B_{m_1+m_2}(\delta)\cap C\big)} \subset {F\Big((B_{m_1}(\delta_1)\cap C_1)\times (B_{m_2}(\delta_2)\cap C_2)\Big)}
 \end{array}$$ 
 In particular, for every   $ 0<\varepsilon\leq \bar{\varepsilon}$,  there exist $c_1^\varepsilon\in B_{m_1}(\delta_1)\cap C_1$ and $ c_2^\varepsilon\in B_{m_2}(\delta_2)\cap C_2 $ such that {\rm(}$F_1(c_1^\varepsilon)\in {\mathcal A}_1$ , $F_2(c_2^\varepsilon)\in {\mathcal A}_2$ and{\rm)}
$                                                            
F_1(c_1^\varepsilon)-F_2(c_2^\varepsilon)=0$,                
namely                                                         
\bel{c1}                                                       
{y^\varepsilon:=}F_1(c_1^\varepsilon)=F_2(c_2^\varepsilon)\in {\mathcal A}_1\cap {\mathcal A}_2 \end{equation} 
and $$\mu (F_1(c_1^\varepsilon))=\varepsilon.$$ 
{Since $\mu$ is linear,  the last relation implies  that}               
\bel{c3}                                                      
y^\varepsilon\neq 0 \quad \forall \varepsilon\in(0,\bar \varepsilon).
\end{equation}
Assume by contradiction that    ${\mathcal A}_1$ and ${\mathcal A}_2$ are  locally separated at $x=0$: this means that there exists $\eta>0$ such that
\bel{Asep} 
{\mathcal A}_1\cap {\mathcal A}_2\cap B_n(\eta) = \{0\}
\eeq
Yet, by the continuity of $F_1$ and by taking $\delta_1$ sufficiently small we can posit 
\bel{F1cont} 
{|y^\varepsilon| }\leq \eta/2,
\eeq
which, together {\eqref{c1}} contradicts \eqref{Asep}.  Hence   ${\mathcal A}_1$ and ${\mathcal A}_2$ are not   locally separated.

 To conclude the proof it remains to show the validity of  { \bf Claim 1}.
\vskip0.4truecm
\noindent \textit{Proof of   { \bf Claim 1}.} 

\noindent Since   $\bar{w}\in {\mathbf{K}}_1\cap {\mathbf{K}}_2 $,  there exists a pair  $(\bar c_1,\bar c_2)\in C_1\times C_2$ such that  
$\bar{w}=L_1\bar{c}_1=L_2\bar{c}_2. $
 Moreover, since the cones ${\mathbf{K}}_1$ and ${\mathbf{K}}_2$ are (strongly) transversal, for any  $ w\in \R^n $ there exists a pair $( c_1, c_2)\in C_1\times C_2$ such that  $ w=L_1c_1-L_2c_2 .$ 
Observe  that, for any $r\geq 0 $, one has $$L_1(c_1+r\bar{c}_1) \in {\mathbf{K}}_1,\quad L_2(c_2+r\bar{c}_2) \in {\mathbf{K}}_2, 
$$
$$
w\, = L_1c_1-L_2c_2  = L_1(c_1+r\bar{c}_1)-L_2(c_2+r\bar{c}_2),
$$
and
\bel{c4}\mu(L_1(c_1+r\bar{c}_1))=  \mu(L_1c_1)+r.  \end{equation}
Notice, in particular, that for any $\gamma\geq \mu(L_1c_1)$ we have $\mu(L_1(c_1+r\bar{c}_1)) = \mu(L_1(c_1))+rL_1(\bar{c}_1) =  \mu(L_1(c_1)+r=  \gamma$ provided 
 $r:= \gamma - \mu(L_1c_1)$.

 For every $i=1,\dots, n$, let  ${\bf e}_i$ be   the $i-$th element of the  canonical basis of $\R^n$ and set ${\bf e}_0 := - \sum_i {\bf e}_i$. In particular  \bel{int}0= \ds\sum_{i=0}^n\frac{{\bf e}_i}{n+1}\in int\,\,\text{co}\{{\bf e}_0,...,{\bf e}_n\}. \end{equation}

For any $i=0,\dots,n$, let us choose  ${c_1}_i\in C_1$,  ${c_2}_i\in C_2$ , $ i=0,\ldots,n$, such that ${\bf e}_i= L_1{c_1}_i-L_2{c_2}_i$, for all $i=0,\ldots,n$. Now choose 
 $r>0$ so that $\mu(L_1{c_1}_i)+r>0$ for all $i=0,\dots,n$, and set $$\gamma:=\ds\sum_{i=0}^{n} \frac{\mu(L_1{c_1}_i)+r}{n+1} =  \ds\sum_{i=0}^{n} \frac{\mu(L_1{c_1}_i)}{n+1} +r\,\, (>0).$$ 
 We get
 $$
 L({c_1}_i +r\bar c_1 ,{c_2}_i+r\bar c_2) = \left({\bf e}_i, \mu(L{c_1}_i)+r\right)\in LC
  \qquad\forall i=0,\dots,n.
  $$ 
 and
$$ (0,\gamma) = \frac{1}{n+1}\sum_{i=0}^n\big({\bf e}_i, \mu(L_1{c_1}_i)+r\big)\in int\,\, {LC}, $$ which, since  $\gamma>0$, is  equivalent 
to $(0,1)\in int\,\,{LC}$.
 \end{proof}

\begin{cor}\label{moltiplicatori} {Let   us consider two subsets ${\mathcal A}_1$, ${\mathcal A}_2\subseteq \R^n$.}
Let $ {\mathcal A}_1, {\mathcal A}_2$ be locally separated at $y$ and let $ {\mathbf{K}}_1 $ and $ {\mathbf{K}}_2 $ be
Boltyanski approximating cones at $y$ to  ${\mathcal A}_1$ and ${\mathcal A}_2$, respectively.
 If one of the cones 
 $ {\mathbf{K}}_1 $,  ${\mathbf{K}}_2 $ is not a linear subspace, then  ${\mathbf{K}}_1$ and ${\mathbf{K}}_2$ are linearly separable. 
\end{cor}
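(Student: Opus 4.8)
The plan is to chain together the three structural results already established about cones, so that essentially no new computation is required. Specifically, I would combine the contrapositive of Theorem \ref{teoteo}, the dichotomy of Proposition \ref{teo2}, and the separation criterion of Proposition \ref{teo3}. The first step uses the local-separation hypothesis directly: since $\mathcal{A}_1$ and $\mathcal{A}_2$ are locally separated at $y$, and $\mathbf{K}_1,\mathbf{K}_2$ are approximating cones at $y$ to $\mathcal{A}_1,\mathcal{A}_2$, the cones $\mathbf{K}_1$ and $\mathbf{K}_2$ cannot be strongly transversal, for otherwise Theorem \ref{teoteo} would force the two sets to fail to be locally separated.

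The second step is the only genuinely load-bearing point, and it is where the hypothesis that one of the cones is not a linear subspace enters. I claim that $\mathbf{K}_1$ and $\mathbf{K}_2$ are not transversal. Arguing by contradiction, suppose they were transversal. Then Proposition \ref{teo2} forces exactly one of two alternatives: either $\mathbf{K}_1$ and $\mathbf{K}_2$ are strongly transversal, or both $\mathbf{K}_1$ and $\mathbf{K}_2$ are linear subspaces with $\mathbf{K}_1\oplus\mathbf{K}_2=\R^n$. The first alternative is ruled out by the first step, and the second is ruled out by the standing hypothesis that at least one of the cones is not a subspace. This contradiction shows that $\mathbf{K}_1$ and $\mathbf{K}_2$ are not transversal.

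Finally, I would invoke Proposition \ref{teo3}, which states that non-transversality of two convex cones is equivalent to their linear separability. Applied to $\mathbf{K}_1$ and $\mathbf{K}_2$, this produces a nonzero $p\in(\R^n)^*$ with $p\cdot k_1\geq 0$ for all $k_1\in\mathbf{K}_1$ and $p\cdot k_2\leq 0$ for all $k_2\in\mathbf{K}_2$, which is precisely the asserted conclusion. I do not expect any real obstacle, since the content of the corollary is entirely repackaged in the earlier propositions; the one point worth stating carefully is the logical role of the ``not a subspace'' hypothesis, which is exactly what collapses the dichotomy of Proposition \ref{teo2} onto the already-excluded strong-transversality branch. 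Its necessity is transparent: two complementary subspaces are transversal, hence by Proposition \ref{teo3} not linearly separable, even though they may well be locally separated as sets, so without this hypothesis the conclusion would fail.
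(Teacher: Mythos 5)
Your proof is correct and follows exactly the paper's own argument: rule out strong transversality via Theorem \ref{teoteo}, use the dichotomy of Proposition \ref{teo2} together with the ``not a subspace'' hypothesis to rule out transversality altogether, and conclude by Proposition \ref{teo3}. The paper's proof is a three-line version of the same chain, so no further comment is needed.
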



\begin{proof} By Theorem \ref{teoteo} we
deduce  that ${\mathbf{K}}_1$ and ${\mathbf{K}}_2$ are not strongly transversal. In fact,  ${\mathbf{K}}_1$ and ${\mathbf{K}}_1$ are not even transversal, as one of them is not a linear subspace (Proposition \ref{teo2}). Hence, Proposition \ref{teo3} applies and we get the thesis.\end{proof}

\vskip1truecm
\section{Abstract Maximum Principle (AMP)}
We will investigate  an abstract optimal control problem defined by a a five-tuple $(\Omega, \mathcal U,y,{\mathcal S},\Psi)$ such that:
\begin{enumerate}
	\item for some positive integer $n$, $\Omega$ is an open subset of $\R^n$;
\item
  $\mathcal U$ is any set, called  {\it control family },
while the elements $u\in  \mathcal U$ are called   {\it inputs}  or {\it controls};
\item $y=y[\cdot ]$ is an {\it input-output  map}, namely any map 
$$\bal
\ \ \ \ \ \ \ \ \ \ y:{\mathcal U}\to \Omega\\
\quad\ \,\,\,\,\,\,\,\,\,\,\,\,\,\,\,\,\,\,\,u\mapsto y[u] \,,
\ea
$$
and, for any $u\in {\mathcal U}$,  we call $y[u]\in\Omega$ {\it the output corresponding to the control $u$;}
\item   ${\mathcal S}\subset\Omega$ is any set, called  {\it target};
 
 \item 
$\Psi:\Omega\to\R $ is any map, called  {\it cost map}.
\end{enumerate}

A pair $(u,y)\in {\mathcal U}\times \Omega$ will be called an {\it input-output pair} if $y=y[u]$. Moreover, an input-output pair $(u,y[u])$  will be said {\it feasible} provided it satisfies the constraint $
y[u]\in {\mathcal S}.$  For any subfamily $\hat{\mathcal U}\subseteq{\mathcal U}$, the subset $y[\hat{\mathcal U}]$ will be called the  {\it  $\hat{\mathcal U}$-reachable  set.}
\begin{defin}\label{locmin} Let us consider a subset $\hat{\mathcal U}\subseteq  {\mathcal U}$ and  a feasible process $(u_\star,y_\star)$.
We say that the process $(u_\star,y_\star) $  is a {\rm 
$\hat{\mathcal U}$-minimizer}  of
$$
\ds
{\minimize}\,\, \Psi(y[u])
$$
 if 
\bel{minab}
\Psi(y[u]) \geq \Psi(y_\star) \end{equation} for all feasible controls   $u\in \hat{\mathcal U}$.\footnote{ I.e. controls $u\in \hat{\mathcal U}$ such that $y[u]\in  {\mathcal S}$ }
 \end{defin}

We will state and prove a necessary condition, in the form of an {\it  Abstract Maximum Principle} (AMP), for this minimum problem.
This result can be applied to a vast class of problems, from finite-dimensional problems to classical optimal control problems with end-point constraints (Section \ref{nonfreech}).

\begin{rem}{\rm Let us point out  that   the family of controls $ {\mathcal U}$ is just an abstract set, with no topological structure.  While 
$ {\mathcal U}$ is often provided with a topology, the choice of regarding it as an arbitrary set  allow us to stress the fact that the Maximum Principle is actually independent of any structural assumption on the input space. Actually, it is nothing but the expression of a certain set-separation and of the consequent separability of approximating cones.} \end{rem}

\begin{rem}{\rm In the applications to concrete minimum problems the choice of the subset  $\hat{\mathcal U}\subseteq  {\mathcal U}$ is crucial. For instance, in the case of  optimal control problems   (with or without  end-point constraints),  by choosing $\hat{\mathcal U}$ to be a  $L^1\times C^0$ neighbourhood of a process   $(u_\star,y_\star)\in  L^1\times C^0$,      we get the notion of {\it weak local minimizer}, while if $\hat{\mathcal U}$ is identified with the set of controls $u$ such that $y[u]$ belongs to a $C^0$ neighbourhood of $x_\star$ we get the notion of {\it strong local minimizer}.}
\end{rem}

\vskip0.4truecm

\begin{theorem}[{\bf AMP}]\label{AMP}
  Let  $(u_\star,y_\star)$ be a {\rm 
  	$\hat{\mathcal U}$-minimizer} and let  ${\mathcal U}_\star \subseteq \hat {\mathcal U}$ be a  subfamily of controls such that $u_\star\in {\mathcal U}_\star$ .   Furthermore,  let $\mathbf{S}$ and $\mathbf{R}$  be Boltyanski approximating cones at $ y_\star$ to the target ${\mathcal S} $ and to  the   ${\mathcal U}_\star$-reachable set $y\left[{\mathcal U}_\star\right]$, respectively.
  
Then, there exists a linear form  $(\lambda,\lambda_c)\in (\mathbf{R}^{n+1})^*$ such that $\lambda_c\leq 0$ and   the following conditions are verified:
\begin{itemize}
\item{\sc Nontriviality:}\,\,\,\,\,\,\bel{ant}(\lambda, \lambda_c)\neq 0;\end{equation}
\item {\sc Maximization:} \,\,\,\,\bel{amp11}\max \left\{\left(\lambda  + \lambda_c{\nabla}\Psi(y_\star)\right)\cdot  v, \,\,\,\,\,\, v\in \mathbf{R}\right\} =0  ;\end{equation}
\item {\sc Non-transversality:}\,\,\,\,\,\,\,\, \bel{antranv11}\lambda  \in -{\mathbf{S}}^\bot .\end{equation}

\end{itemize}

\end{theorem}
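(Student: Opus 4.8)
The plan is to lift the problem to the augmented space $\R^{n+1}=\R^n\times\R$, whose last coordinate records the value of the cost, and to reproduce there the set-separation mechanism already illustrated for Fermat's rule, closing with Corollary \ref{moltiplicatori}. Writing points of $\R^{n+1}$ as $(x,x^c)$ and setting $\bar y:=(y_\star,\Psi(y_\star))$, I would introduce the \emph{profitable set}
$$\mathcal{A}_1:=\big\{(x,x^c):\ x\in\mathcal{S},\ x^c<\Psi(y_\star)\big\}\cup\{\bar y\}$$
and the \emph{augmented reachable set}
$$\mathcal{A}_2:=\big\{(y[u],\Psi(y[u])):\ u\in\mathcal{U}_\star\big\},$$
both containing $\bar y$. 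The whole argument then reduces to three steps: (a) showing $\mathcal{A}_1,\mathcal{A}_2$ are locally separated at $\bar y$; (b) exhibiting approximating cones $\mathbf{S}',\mathbf{R}'$ to $\mathcal{A}_1,\mathcal{A}_2$ at $\bar y$; (c) invoking Corollary \ref{moltiplicatori} and reading off the three conditions together with the sign of $\lambda_c$.

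For (a) I would show that in fact $\mathcal{A}_1\cap\mathcal{A}_2=\{\bar y\}$ \emph{globally}. Indeed, if $(x,x^c)\in\mathcal{A}_1\cap\mathcal{A}_2$ were different from $\bar y$, then $(x,x^c)=(y[u],\Psi(y[u]))$ for some $u\in\mathcal{U}_\star\subseteq\hat{\mathcal{U}}$, while simultaneously $x\in\mathcal{S}$ and $x^c<\Psi(y_\star)$. Thus $u$ would be feasible with $\Psi(y[u])<\Psi(y_\star)$, contradicting that $(u_\star,y_\star)$ is a $\hat{\mathcal{U}}$-minimizer. Hence the two sets are separated, a fortiori locally separated at $\bar y$.

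For (b), the augmented reachable set is immediate: if $F_R:C_R\cap B_{m_R}(\delta_R)\to y[\mathcal{U}_\star]$, $F_R(c)=y_\star+L_R c+o(c)$, realizes $\mathbf{R}=L_R C_R$, then $\tilde F_R(c):=(F_R(c),\Psi(F_R(c)))$ lands in $\mathcal{A}_2$ and, by differentiability of $\Psi$ at $y_\star$, satisfies $\tilde F_R(c)=\bar y+(L_R c,\nabla\Psi(y_\star)\cdot L_R c)+o(c)$, so that $\mathbf{R}':=\{(v,\nabla\Psi(y_\star)\cdot v):\ v\in\mathbf{R}\}$ is an approximating cone to $\mathcal{A}_2$. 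The profitable set is the genuinely delicate point, and I expect it to be the main obstacle: I would claim $\mathbf{S}':=\mathbf{S}\times(-\infty,0]$ is an approximating cone to $\mathcal{A}_1$. With $F_S:C_S\cap B_{m_S}(\delta_S)\to\mathcal{S}$ realizing $\mathbf{S}=L_S C_S$, the naive candidate $(c,s)\mapsto(F_S(c),\Psi(y_\star)-s)$ fails to land in $\mathcal{A}_1$ on the face $s=0,\ c\neq0$, where the cost equals $\Psi(y_\star)$ but $F_S(c)\neq y_\star$. I would repair this with a higher-order downward push that leaves the differential untouched, e.g.
$$F'(c,s):=\big(F_S(c),\ \Psi(y_\star)-s-|c|^{3/2}\big),\qquad (c,s)\in\big(C_S\times[0,+\infty)\big)\cap B_{m_S+1}(\delta_S).$$
For $(c,s)\neq(0,0)$ the cost coordinate is strictly below $\Psi(y_\star)$ while $F_S(c)\in\mathcal{S}$, so $F'(c,s)\in\mathcal{A}_1$; at the origin it equals $\bar y$; and since $|c|^{3/2}=o(|(c,s)|)$, the map $F'$ is differentiable in the direction of the convex cone $C_S\times[0,+\infty)$ with differential $(c,s)\mapsto(L_S c,-s)$, whose image is exactly $\mathbf{S}'$.

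For (c), note that $\mathbf{S}'=\mathbf{S}\times(-\infty,0]$ is never a linear subspace, its second factor being a proper half-line; so Corollary \ref{moltiplicatori} applies and produces a non-zero $(\lambda,\lambda_c)\in(\R^{n+1})^*$ with $(\lambda,\lambda_c)\cdot w\geq0$ for all $w\in\mathbf{S}'$ and $(\lambda,\lambda_c)\cdot v\leq0$ for all $v\in\mathbf{R}'$. This is exactly Nontriviality. Testing the first inequality on $(0,-1)\in\mathbf{S}'$ gives $\lambda_c\leq0$, and on $(w,0)$ with $w\in\mathbf{S}$ it gives $\lambda\cdot w\geq0$ for all $w\in\mathbf{S}$, i.e. $\lambda\in-\mathbf{S}^\bot$ (Non-transversality). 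Evaluating the second inequality on $(v,\nabla\Psi(y_\star)\cdot v)$ yields $(\lambda+\lambda_c\nabla\Psi(y_\star))\cdot v\leq0$ for all $v\in\mathbf{R}$; since $\mathbf{R}$ is a cone containing $0$, the supremum of this linear form over $\mathbf{R}$ equals $0$, which is the Maximization condition. Thus all the assertions follow, the only genuinely technical point being the construction of $F'$ witnessing $\mathbf{S}'$.
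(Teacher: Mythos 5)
Your proof is correct and follows essentially the same route as the paper: lift to $\R^{n+1}$, separate the profitable set from the augmented reachable set, exhibit the approximating cones $\mathbf{S}\times(-\infty,0]$ and $\{(v,\nabla\Psi(y_\star)\cdot v):\ v\in\mathbf{R}\}$, and conclude via Corollary \ref{moltiplicatori} together with the observation that the first cone is never a linear subspace. The one (welcome) deviation is your choice of the profitable set as the cylinder $\big(\mathcal{S}\times(-\infty,\Psi(y_\star))\big)\cup\{(y_\star,\Psi(y_\star))\}$ rather than the graph-restricted set \eqref{PT}: this makes your corrected map $F'(c,s)=\big(F_S(c),\Psi(y_\star)-s-|c|^{3/2}\big)$ genuinely take values in the set, whereas the paper's own witness $\tilde F(c,\alpha)=(F(c),\Psi(y_\star)-|c|^2+\alpha)$ does not actually land in $\tilde{\mathcal S}$ as defined there, since its second coordinate need not equal $\Psi(F(c))$.
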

 \begin{rem}\label{lamda-1}{\rm If $\lambda=(\lambda_1,\ldots,\lambda_n)$, often the numbers $\lambda_1,\ldots,\lambda_n,\lambda_c$ are called {\it multipliers}. In particular, one says that  $\lambda_c$ is the {\it cost multiplier}.    Notice that  $(\lambda,\lambda_c)$ are determined up to multiplication by  a positive constant, so that,  without loss of generality, we might assume that $\lambda_c$ is either $-1$ or $0$. 
 		
 	}
  \end{rem}

The fact that one can choose $\lambda_c<0$   (or, equivalently,  $\lambda_c=-1$)     will be crucial in the applications {to optimal control theory}  (where we shall use the notation $p_c$ instead of $\lambda_c)$). Actually, already at this abstract level, 
the case when $\lambda_c<0$ the Maximum Principle is more informative than the case
$\lambda_c=0$. This justifies the following definition: 
\begin{defin}\label{normal}
	{A minimizer  $(u_\star,y_\star)$ for which every choice of $(\lambda^\star,\lambda_c^\star)$ in the Maximum Principle is such that 
		$\lambda_c^\star<0$ (or, equivalently,  $\lambda_c=-1$) is called {\rm a normal minimizer}. On the contrary, when it is possible to choose  $(\lambda^\star,\lambda_c^\star)$ with $\lambda_c^\star=0$, the process $(u_\star,y_\star)$  is called {\rm an abnormal minimizer}.  }
\end{defin}
\vskip4truemm
 For every $y\in \Omega$, let us set $y^c= \Psi(y)$ and   let us define the {\it profitable set } $\tilde{\mathcal S}\subset \Omega\times\R$ by setting
\bel{PT}
\tilde{\mathcal S} :=\Big\{(y, \Psi(y))\,\,: \quad y\in {\mathcal S},  \  \Psi(y) < \Psi(y_\star)\Big\}\bigcup\Big\{ \left( y_\star, \Psi(y_\star)\right)\Big\}.
\end{equation}
Moreover let us define {\it augmented   $\mathcal{U}_{\star}$-reachable  set } $\tilde {\mathcal R}\subset\Omega\times\R$
as
\bel{ARS}
\tilde{\mathcal R}: =\Big\{\left(y[u],\Psi(y[u])\right)\quad u\in {\mathcal{U}_{\star}}\Big\}
\end{equation}

and notice that 
$$\big(y_\star, \Psi(y_\star)\big)\in \tilde{\mathcal S}\cap \tilde{\mathcal R}.
$$In order to prove Theorem \ref{AMP} we will utilize the following self-evident  characterization of a local minimum:
\begin{lemma}[\bf Characterization of minima in terms of set separation]\label{min-sep}
	
	{   A feasable process $(u_\star, y_\star)$ is a local  minimum if and only if  the extended reachable set $\tilde{\mathcal R}$ and the profitable set
		$\tilde{\mathcal S}$ are locally separated at $(y_\star, \Psi(y_\star))$.}
\end{lemma}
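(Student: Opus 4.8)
The plan is to treat the statement as a direct unpacking of the two definitions, the only genuine content being a careful matching of neighbourhoods. First I would record the decisive observation that a point $(y,y^c)\in\tilde{\mathcal R}\cap\tilde{\mathcal S}$ distinct from $(y_\star,\Psi(y_\star))$ exists if and only if there is a control $u\in\mathcal U_\star$ such that $y=y[u]\in\mathcal S$ and $\Psi(y[u])<\Psi(y_\star)$. Indeed, membership in $\tilde{\mathcal R}$ forces $(y,y^c)=(y[u],\Psi(y[u]))$ for some $u\in\mathcal U_\star$, while membership in $\tilde{\mathcal S}$ (away from the base point) forces $y[u]\in\mathcal S$ together with the strict cost inequality. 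Thus the ``extra'' intersection points are precisely the feasible controls in $\mathcal U_\star$ that strictly beat $(u_\star,y_\star)$, and the whole proof amounts to translating ``no such competitor near $y_\star$'' into ``no such intersection point near $(y_\star,\Psi(y_\star))$''.

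For the forward implication I would assume $(u_\star,y_\star)$ is a local minimum, i.e. that there is a neighbourhood $W$ of $y_\star$ in $\Omega$ such that $\Psi(y[u])\geq\Psi(y_\star)$ for every feasible $u\in\mathcal U_\star$ with $y[u]\in W$. Taking $N:=W\times\R$, the observation above shows that no intersection point other than $(y_\star,\Psi(y_\star))$ can lie in $N$: such a point would supply a feasible $u\in\mathcal U_\star$ with $y[u]\in W$ and $\Psi(y[u])<\Psi(y_\star)$, contradicting the choice of $W$. Hence $\tilde{\mathcal R}\cap\tilde{\mathcal S}\cap N=\{(y_\star,\Psi(y_\star))\}$, which is exactly local separation. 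This direction needs no regularity of $\Psi$.

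For the converse I would assume local separation, fix a separating neighbourhood $N$, and shrink it to a product $W\times(\Psi(y_\star)-\eps,\Psi(y_\star)+\eps)\subseteq N$ with $W$ a neighbourhood of $y_\star$. Here is the one place where a hypothesis beyond the bare definitions enters: to guarantee that a nearby cheaper feasible point actually lands inside $N$, I would use the continuity of $\Psi$ at $y_\star$ (available, since the AMP already invokes $\nabla\Psi(y_\star)$) to shrink $W$ further so that $y[u]\in W$ forces $\Psi(y[u])\in(\Psi(y_\star)-\eps,\Psi(y_\star)+\eps)$. Then any feasible $u\in\mathcal U_\star$ with $y[u]\in W$ and $\Psi(y[u])<\Psi(y_\star)$ would produce an intersection point $(y[u],\Psi(y[u]))\in N$ different from $(y_\star,\Psi(y_\star))$, contradicting separation; so $\Psi(y[u])\geq\Psi(y_\star)$ throughout $W$, i.e. $(u_\star,y_\star)$ is a local minimum.

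The main obstacle, and the only step that is not pure bookkeeping, is precisely this neighbourhood matching in the converse: local separation controls the intersection only inside a full $(n+1)$-dimensional neighbourhood $N$, whereas minimality lives in the output space $\Omega$, and a competing feasible output could a priori have a cost far below $\Psi(y_\star)$ and thus escape the cost-slab of $N$. Continuity of $\Psi$ at $y_\star$ is exactly what excludes this pathology and makes the two notions coincide, so I would flag it explicitly rather than hide it under the word ``self-evident''.
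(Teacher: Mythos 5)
Your proof is correct, and in fact it supplies an argument where the paper supplies none: the authors simply label Lemma \ref{min-sep} ``self-evident'' and give no proof, so there is no official argument to compare against. Your decomposition is the natural one --- identifying the nontrivial points of $\tilde{\mathcal R}\cap\tilde{\mathcal S}$ with feasible controls in $\mathcal U_\star$ that strictly beat $(u_\star,y_\star)$ --- and both implications go through as you write them. Two remarks on how your reading interacts with the paper's conventions. First, the paper's Definition \ref{locmin} of a $\hat{\mathcal U}$-minimizer contains no neighbourhood in the output space $\Omega$: locality is encoded entirely in the choice of the subfamily $\mathcal U_\star$, and the inequality $\Psi(y[u])\geq\Psi(y_\star)$ is required for \emph{all} feasible $u\in\mathcal U_\star$. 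Under that literal reading the ``only if'' direction is even cheaper than your version (the intersection $\tilde{\mathcal R}\cap\tilde{\mathcal S}$ reduces globally to the base point, so one may take $N$ to be the whole space), whereas the ``if'' direction is genuinely false without either your topological reinterpretation of ``local minimum'' or an a priori restriction confining the outputs $y[\mathcal U_\star]$ near $y_\star$; your choice to read ``local'' topologically is what makes the equivalence an honest one. Second, your observation that the converse requires continuity of $\Psi$ at $y_\star$ --- to prevent a nearby cheaper feasible output from escaping the cost-slab of $N$ --- is a real hypothesis hidden by the word ``self-evident'': in the abstract setup $\Psi$ is declared to be ``any map'', and only in Theorem \ref{AMP} does differentiability (hence continuity) enter. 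Since the AMP only uses the ``only if'' direction, the paper loses nothing by the omission, but your flag is the honest way to state the lemma as a biconditional.
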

\begin{proof}[Proof of Theorem \ref{AMP}]

We claim that {\it if $\mathbf{S}$ is a Boltyanski approximating cone to the target ${\mathcal S}$ at $y_\star$, then 
$$
\tilde{\mathbf{S}} := \mathbf{S}\times]-\infty,0] 
$$
is a Boltyanski approximating cone to $\tilde{\mathcal S}$ at  $(y_\star, \Psi(y_\star))$.}
Indeed, by hypothesis, there exist a natural  number $m\geq 0$, a convex cone   $C\subset\R^m$, a neighbourhood  $U$ of $0\in\R^m$, a homomorphism $L\in Hom(\R^m,\R^n)$,  and  a map $F: C\cap U\to {\mathcal S}$ such that  $$F(c)= y_\star + L c + o(c)\qquad\forall c\in C\cap U,$$   and $\mathbf{S}=L C$.
Consider the map $\tilde F:C\times]-\infty,0] \to\tilde{\mathcal S}$,
$$
\tilde F(c, \alpha)= (F(c),
\Psi(y_\star)-|c|^2 +\alpha)
$$
Then, $$\tilde F (c, \alpha)=\left(y_\star, \Psi(y_\star)\right)+ \tilde L\cdot(c, \alpha)+ {( o(c),-|c|^2))} = \left(y_\star, \Psi(y_\star)\right)+ \tilde L\cdot(c, \alpha)+  o(c), $$ 
 where we have set 
$$
\tilde L(c, \alpha):= (L c, \alpha)\qquad\forall (c, \alpha) \in C\times]-\infty,0],$$
so that   $\tilde{\mathbf{S}} = \tilde L\tilde C$. Therefore  $\tilde{\mathbf{S}}$  is 
an approximating cone to $\tilde{\mathcal S}$ at $(y_\star, \Psi(y_\star))$, and the claim is proved.
 
As for $\mathbf{R}$, which by hypothesis  is a Boltyanski approximating cone to the ${\mathcal U}_\star$-reachable  set $y[{\mathcal U}_\star]$ at $y_\star$, we can obviously assume it  is different from $\{0\}$.\footnote{Indeed, in the case when   $\{0\}$ is the only approximating cone at $y_\star$ to $y[{\mathcal U}_\star]$ one has $y[{\mathcal U}_\star] = \{y_\star\}$,  so that  the theorem is trivially true with $(\lambda,\lambda_c)$ = (0,-1)} Then the cone 
\bel{coneaug}\tilde{\mathbf{R}} =\Big\{(v,\nabla\Psi(y_\star)\cdot  v), \ v\in \mathbf{R}\Big\}\end{equation}
is  a Boltyanski approximating cone of  the  {\it  augmented   $\mathcal{U}_{\star}$-reachable  set } $\tilde {\mathcal R}$ at $(y_\star,\Psi(y_\star))$. Indeed, by hypothesis there exists a convex cone   $W\subset\R^m$, a neighbourhood  $U$ of $0\in\R^m$, a linear mapping $M\in Hom(\R^m,\R^n)$,  and  a map $G: W\cap U\to y[\mathcal{U}_{\star}]$ such that  $$G(w)= y_\star + M w + o(w)\qquad\forall w\in W\cap U,\quad \mathbf{R}=M W. $$  Therefore, if we define the map  $\tilde G:W\cap U\to y[\mathcal{U}_{\star}]\times \Psi\left(y[\mathcal{U}_{\star}]\right)$ and the linear mapping $\tilde M\in M\in Hom(\R^m,\R^{n+1})$ by setting, for every $w\in  W \cap U$,   $$\tilde G(w) = (G(w),\Psi\circ G(w)),\qquad \tilde M w:= \left(M w, \nabla\Psi(y_\star) (M w) \right), $$ we get 
$$\begin{array}{c}
\tilde G(w)\in \tilde{\mathcal{S}} \quad \quad \tilde MW = \tilde{\mathbf{R}}\\\tilde G(w)= (y_\star,\Psi(y_\star)) + \tilde M w + o(w)\,\,\quad\forall w\in W\cap U.
\end{array}
$$

Since the extended reachable set  $\tilde{\mathcal R}$ and  the profitable set
$\tilde{\mathcal S}$  are locally separated at $(y_\star, \Psi(y_\star))$ (Lemma \ref{min-sep}), {and $\tilde{\mathbf{S}}$ is not a subspace, by Corollary \ref{moltiplicatori}  it follows that the corresponding  approximating cones 
$\tilde{\mathbf{R}}$
 and $\tilde{\mathbf{K}}$ turn out to be  not strongly transversal. Furthermore, since $\tilde{\mathbf{S}}$ is not a subspace, Proposition \ref{teo2} implies that  they are not even transversal.  Hence, by  Proposition \ref{teo3}  $\tilde{\mathbf{R}}$
 and $\tilde{\mathbf{S}}$ }
  are linearly separable, i.e. there exists a  linear form $(\lambda,\lambda_c)\in(\R^{n+1})^*\backslash\{(0,0)\} $ such that 
  $$
 (\lambda,\lambda_c)\in -\tilde {\mathbf{S}}^\bot, \qquad   (\lambda,\lambda_c)\in \tilde{\mathbf{R}}^\bot,
 $$
 namely 
 $$
(\lambda,\lambda_c)\cdot(v,v^c) \geq 0 \quad\forall (v,v^c)\in\tilde {\mathbf{S}},\qquad  (\lambda,\lambda_c)\cdot(v,v^c) \leq 0 \quad\forall (v,v^c)\in\tilde{\mathbf{R}}.
 $$
 Since
 $
 \tilde {\mathbf{S}}^\bot = {\mathbf{S}}^\bot\times [0,+\infty[,
 $
 one gets $$\lambda\in - {\mathbf{S}}^\bot\qquad \lambda_c\leq 0.$$ Moreover,  by the definition of $\tilde{\mathbf{R}}$ one obtains 
\bel{oooo}\lambda\cdot v + \lambda_c\nabla \Psi(\hat y)\cdot  v\leq 0,\quad\forall v\in {\mathbf{R}}, \end{equation}
which is equivalent to the maximum relation \eqref{amp11}. The theorem is then proved.
\end{proof}

\vskip0.4truecm
\subsection{Finite-dimensional \ \ applications \ \ of\ \ the \ \ AMP}
We can deduce some  Calculus results  from the above Abstract Maximum Principle. 
\vskip0.4truecm
\noindent {\bf  Lagrange Multipliers Rule.} 
If the target  $\mathcal{S}$ is the local zero level 
   of a map $( \varphi_1,...,\varphi_{k}):\R^n\to\R^k$, namely 
   $$
  \mathcal{S}=\{y\in\R^n :\qquad \varphi_1(y) = 0,...,\varphi_k(y)=0\}
   $$
   for suitable $C^1$ maps $\varphi_1,...,\varphi_{n-m}$ such that the covectors
   $ \nabla\varphi_1(x),..., \nabla\varphi_k (x)$ are linearly independent in a neighborhood of $x_\star$, $ {\mathcal S}$ is a $C^1$ submanifold and  the
   the tangent space 
   $$
T_{x_\star} {\mathcal S}:= \Big\{v\in\R^n : \quad  \nabla\varphi_i(x_\star) \cdot  v = 0,\,\, i=1,\ldots,k \Big\} 
   $$
  is, in particular,  an  approximating cone to $\mathcal{S}$ at
   $x_\star$. 
   If we take ${\mathcal U}=\R^n$, $y[\cdot]= id$, i.e.  $y[u]=u$ for all $u\in {\R^n}$, and we suppose that 
   $(u_\star,x_\star) = (x_\star,x_\star) $,  ($x_\star\in {\mathcal S}$) is a locally  optimal process for the problem  
   $$\minimize  \ \Psi (x)\qquad x\in \mathcal{S},$$ by the AMP  (Theorem  \ref{AMP}) we get that there exixts $(\lambda,\lambda_c)\in (\R^{n+1})^*\backslash\{0\}$ such that 
   \bel{max1}\max \Big\{\big(\lambda  + \lambda_c\nabla\Psi (x_\star)\big)\cdot  v\leq 0, \,\,\,\forall v\in  {\mathbf{R}}\Big\} =0 \eeq and 
$$\lambda  \in -{{\mathbf{K}}}^\bot = \ds \text{\rm span}\left\{ \nabla\varphi_i(x), \ \ i=1,\dots,k\right\} .$$
 The last relation   states that  $\lambda = \ds\sum_{i=1}^k\alpha_i \nabla\varphi_i(x)$ for suitable real numbers $\alpha_1,\ldots,\alpha_k\in \R$.
 Since the reachable set  ${\mathcal R}$ coincides with $  \R^n$,  so that we can choose ${\mathbf{R}}=\R^n$,  by \eqref{max1} we get 
 $\big(\lambda  + \lambda_c\nabla \Psi(\hat y)\big) = 0$. Therefore,
\bel{lag}
 \sum_{i=1}^k\alpha_i \nabla \varphi_i(x) + \lambda_c\nabla\Psi(x_\star) = 0,
\end{equation}
a necessary conditions which coincides   with the well-known { \it Lagrange multipliers' rule.}

\vskip0.4truecm   
\noindent{{\bf Kuhn-Tucker Condition.}   Let us consider the more general  problem
$$\minimize \ \Psi (x),\qquad x\in \mathcal{S}$$ 
   $$
   \mathcal{S}=\Big\{y\in\R^n :\qquad \varphi_1(y) = 0,...,\varphi_k(y)=0,  h_1(y) \leq 0,...,h_r(y)\leq 0 \Big\}
   $$
   for suitable $C^1$ maps $\varphi_1,...,\varphi_k, h_1,...,h_r$. Let $x_\star\in \mathcal{S}$ be a local minimum, and, for a possibly empty subset  $\{i_1,\dots,i_q\}\subset \{1,\dots,r\}$, let the equalities 
 $h_{i_1}(x_\star)=0,...,h_{i_q}(x_\star)=0$ hold true (besides $\varphi_1(x_\star)=...=\varphi_k(x_\star)=0$). Moreover, assume that  the gradients  $$\ds\nabla h_{i_1} (x_\star),...,\nabla h_{i_q} (x_\star) ,\ \ds\nabla \varphi_1(x_\star),..., \nabla \varphi_k(x_\star)$$ are linearly independent, and $h_j(x_\star)<0$ for every $j\in
   \{1,\dots,r\}\backslash \{i_1,\dots,i_q\}$, so that  {
   $$
   {\mathbf{S}}= \left\{w\in\R^n: \  \nabla \varphi_\ell(x_\star)\,w=0, \  \nabla h_{i_j}(x_\star)\, w \leq 0, \ \ell=1,\dots,k, \ i=1,\dots,q \right\}
   $$
  }
  is an approximating cone to $\mathcal{S}$ at
   $x_\star$. By the Abstract Maximum Principle  we get that $\lambda  \in -{\mathbf{S}}^\bot$, so that  $$\lambda = \ds\sum_{i=1}^k\alpha_i \nabla \varphi_i(x_\star) +
   \sum_{j=1}^q  \beta_{i_j}\nabla h_{i_j}(x_\star)$$ for suitable real numbers $$\alpha_1,\ldots,\alpha_k, \,  \beta_1,\ldots,\beta_q\in \R \quad \beta_j\leq 0 \ \ \forall j=1,\ldots,q.$$
   Since, again,  the reachable set  ${\mathcal R}$ coincides with $  \R^n$,   we can choose ${\mathbf{R}}=\R^n$ as approximating cone  at $x_\star$, so that  by the maximization relation in the Maximum Principle we get
 $\ds \lambda  + \lambda_c\nabla \Psi(x_\star) = 0$. Hence
\bel{lag1}
  \ds\sum_{i=1}^k\alpha_i \nabla \varphi_i(x_\star) +
 \sum_{j=1}^q  \beta_j\nabla h_{i_j}(x_\star)+ \lambda_c\nabla \Psi(x_\star) = 0.
\end{equation}
 which coincides with the well-known { \it Kuhn-Tucker condition}.
}

 \section{Optimal control with a final target }\label{nonfreech}
{Let us now see} an infinite dimensional application of the AMP: the classical Pontryagin Maximum Principle (Theorem \ref{PMPclassic}) for the  optimal control problem 

 \begin{equation}\label{c3}
 		\ds \text{minimize}
 		 \ \left[\Psi(x(b)) + \int_{a}^{b}l(t,x(t),u(t))\,dt\right]\end{equation}
		 {over the set of processes $(u,x)$ satisfying}
 \bel{c3eq}
 		\left\{\begin{array}{l}
 			\ds\dot x(t) = f(t,x(t),u(t))\\x(a)=\bar x,\end{array} \right.
 \end{equation}
 \begin{equation}\label{c3con}
 x(b)\in {\mathcal S}.
 \end{equation}
  We shall limit ourselves to indicate the main steps of the proof, skipping technical passages like the construction of needle variations, a subject that can be recovered in every classical book on optimal control (see e.g.\cite{ces}, \cite{leem}). 
   
Our assumptions for problem \eqref{c3}-\eqref{c3con} are as follows:
\begin{itemize}
\item  the time interval $[a,b]$  is given a priori,  for some positive integers $n,m$, $U\subseteq\R^m$  is any bounded set, called the {\it control set}, and $\Omega\subseteq\R^n$ is an open subset, called the {\it state space}; 
\item the {\it controls} $u:[a,b]\to U\subseteq\R^m$ are  $L^1$-maps   and  the {\it state trajectories} $x:[a,b]\to \Omega\subseteq\R^n$ are absolutely continuous maps verifying the corresponding Cauchy problems \eqref{c3eq}.
\item The maps $f$ and $l$ are called  {\it (controlled) dynamics}  and {\it Lagrangian} or {\it current cost}, respectively. We  assume that     $(f,l)\in C^0\big([a,b]\times \Omega\times U, \R^{n+1}\big)$  and that, for every $(t,\us)\in [a,b]\times  U$, $(f,l)(t,\cdot,\us ) \in C^1(\Omega,\R^{n+1})$. 
\item The  {\it end-point cost } function $\Psi:\Omega\to\R$ is any continuously differentiable function, and the  subset  ${\mathcal S}\subset \R^n$ is called ({\it final}) {\it target}.
\end{itemize}
 If   {$u\in L^1([a,b],U)$} and there exists a   unique   solution $x\in W^{{1,1}}([a,b],\R^n)$ to the corresponding Cauchy problem \eqref{c3eq}, the pair $(u,x)$ will be called  {\it process}. Moreover, a process   $(u,x)$ is said to be  {\it feasible} provided  $x(b)\in\mathcal{S}$. 

\subsection{The Maximum Principle}

\begin{defin}\label{locmince} We shall say that a feasible process $(u_\star,x_\star)$ is a 
{\rm 
local   minimizer} of problem \eqref{c3}-\eqref{c3con} if there exists 
	 some $\delta>0$ such that
	\bel{eminoref}
	\Psi(x(b)) + \int_{a}^{b}l(t,x(t),u(t))\,dt\geq \Psi(x_\star(b)) + \int_{a}^{b}l(t,x_\star(t),u_\star(t))\,dt
\end{equation}  for all feasible  processes $(u,x)$    such that 
$$\|x-x_\star\|_{\infty} + \|u-u_\star\|_{1}  \leq \delta.$$

\end{defin}

\begin{theorem}[{\bf  Maximum Principle}]\label{PMPclassic} 
Let the feasible process 
$(u_\star,x_\star)$ be
a local minimizer  of problem \eqref{c3}-\eqref{c3con}, and let $\mathbf{S}$ be a Boltyanski approximating cone to ${\mathcal S}$ at $x_\star(b)$.
Then there exist an absolutely continuous map $ p
:[a,b]\to\left(\R^n\right)^*$, 
a real number  $p_c\leq 0$,\footnote{In fact, $p_c$ should be regarded as a linear form belonging to polar  $(\R_{\geq 0})^\bot$.} and $\lambda\in {-}{\mathbf{S}}^{\bot}$ verifying the following conditions: 
\begin{itemize}
	\item[{\rm i)}]   {\sc nontriviality condition} 
	$$
	(p,p_c) \neq (0,0)\,;
	$$
	\item[{\rm ii)} ] {\sc adjoint equation}
	
	\bel{adjcon}\begin{array}{r}\ds\dot p
		(t)=-\left(p(t)\,\frac{\partial f}{\partial x}(t,x_\star(t),u_\star(t))
		+p_c\, \frac{\partial l}{\partial x}(t,x_\star(t),u_\star(t))\right) \\ \\\hbox{ for a.e. $t\in
			[a,b]$;}
	\end{array}
\end{equation}

\item[{\rm iii)}]\noindent   {\sc maximum condition}
{\rm \begin{equation}\label{maximumc}
	\begin{array}{r}
		p(t)\cdot f(t, x_\star(t), u_\star(t))+p_c\, l(t, x_\star(t), u_\star(t))  =\qquad\qquad\\ \ds \max_{ \us\in U}  \Big(p(t) \cdot f(t, x_\star(t), \us) +p_c l\,(t, x_\star(t),\us ) \Big)\\ \\ \hbox{ for a.e. $t\in
		[a,b]$}
	\end{array}
\end{equation}}

\item[{\rm iv)}] \noindent  {\sc transversality condition}
\begin{equation}\label{eq:trasv cond}
	p(b)  = p_c \frac{\partial {\Psi} }{\partial x}(x(b)) +\lambda.
\end{equation}

\end{itemize} 
\end{theorem}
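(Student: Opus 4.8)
The plan is to deduce Theorem \ref{PMPclassic} from the Abstract Maximum Principle (Theorem \ref{AMP}) by casting the optimal control problem \eqref{c3eq}--\eqref{c3con} into the abstract format of the previous section. First I would absorb the running cost into the state through the standard augmentation: introduce an extra coordinate $x^0$ with $\dot x^0=l(t,x,u)$, $x^0(a)=0$, and take as abstract state $\xi:=(x^0,x)\in\R^{1+n}$, so that the AMP is applied with its ambient dimension equal to $n+1$. Accordingly, let the control family be $\mathcal U:=L^1([a,b],U)$, let the input--output map $y[u]:=(x^0_u(b),x_u(b))$ be the endpoint of the augmented trajectory, let the abstract target be $\R\times\mathcal S$, and let the abstract cost be $\tilde\Psi(x^0,x):=x^0+\Psi(x)$. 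With these identifications, local minimality of $(u_\star,x_\star)$ in the sense of Definition \ref{locmince} is precisely the statement that $(u_\star,y_\star)$, with $y_\star=(x^0_\star(b),x_\star(b))$, is a $\hat{\mathcal U}$-minimizer of $\tilde\Psi\circ y[\cdot]$, where $\hat{\mathcal U}$ is the set of controls $u$ with $\|x_u-x_\star\|_\infty+\|u-u_\star\|_1\le\delta$.

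Second, I would fix the subfamily $\mathcal U_\star\subseteq\hat{\mathcal U}$ together with an approximating cone $\mathbf R$ to the reachable set $y[\mathcal U_\star]$. Here $\mathcal U_\star$ is the family obtained from $u_\star$ by finitely many Pontryagin (needle) variations with nonnegative parameters ranging in a simplicial cone, and the map sending the variation parameters to the corresponding augmented endpoints plays the role of the map $F$ in Definition \ref{approximatingcone}. The convex cone $\mathbf R\subseteq\R^{1+n}$ is then generated by the endpoint variation vectors $\Phi(b,\tau)\,(\Delta^0_{\tau,w},\Delta_{\tau,w})$, where $\Phi(\cdot,\cdot)$ is the fundamental solution of the augmented variational equation along $(u_\star,x_\star)$ and $(\Delta^0_{\tau,w},\Delta_{\tau,w})=\big((l-l_\star),(f-f_\star)\big)(\tau,x_\star(\tau),w)$ is the instantaneous jump produced by switching to a value $w\in U$ near a Lebesgue point $\tau$. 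That these variations keep the perturbed controls inside $\hat{\mathcal U}$ (small $L^1$ change in $u$, small uniform change in $x$), and that the parameter-to-endpoint map is differentiable at $0$ with differential whose image is $\mathbf R$, is the \emph{technical heart} of the argument and the genuinely hard part; it is exactly the needle-variation construction that we deliberately quote rather than reprove.

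Third, I would apply the AMP. It yields $(\Lambda,\lambda_c)\in(\R^{(1+n)+1})^*\setminus\{0\}$ with $\lambda_c\le0$ satisfying nontriviality, maximization over $\mathbf R$, and non-transversality $\Lambda\in-(\R\times\mathbf S)^\bot$. Since $(\R\times\mathbf S)^\bot=\{0\}\times\mathbf S^\bot$, writing $\Lambda=(\lambda_0,\lambda)$ gives $\lambda_0=0$ and $\lambda\in-\mathbf S^\bot$. Moreover $\nabla\tilde\Psi(y_\star)=(1,\nabla\Psi(x_\star(b)))$, so the maximization condition reads $\lambda_c\,v^0+\big(\lambda+\lambda_c\nabla\Psi(x_\star(b))\big)\cdot v\le0$ for every $(v^0,v)\in\mathbf R$, with supremum equal to $0$.

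Finally, I would introduce the costate by duality and read off the four conclusions. Set $p_c:=\lambda_c\le0$ and define $(p^0(t),p(t)):=\Phi(b,t)^*\big(\lambda_c,\,\lambda+\lambda_c\nabla\Psi(x_\star(b))\big)$, i.e. the solution of the adjoint (backward) equation with terminal data $(\lambda_c,\lambda+\lambda_c\nabla\Psi(x_\star(b)))$ at $t=b$. The adjoint of the augmented variational equation forces $\dot p^0\equiv0$, whence $p^0\equiv p_c$, and yields for $p$ exactly the adjoint equation \eqref{adjcon}. Pairing the terminal multiplier with the variation vector $\Phi(b,\tau)(\Delta^0_{\tau,w},\Delta_{\tau,w})$ transports it back to time $\tau$, so the maximization inequality becomes $p_c\,(l-l_\star)(\tau,x_\star(\tau),w)+p(\tau)\cdot(f-f_\star)(\tau,x_\star(\tau),w)\le0$ for a.e. $\tau$ and all $w\in U$, which is the maximum condition \eqref{maximumc}. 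The terminal value $p(b)=\lambda+p_c\nabla\Psi(x_\star(b))$ with $\lambda\in-\mathbf S^\bot$ is the transversality condition \eqref{eq:trasv cond}. For nontriviality, if $(p,p_c)=(0,0)$ then $\lambda_c=0$ and $p(b)=0$ would force $\lambda=0$, hence $(\Lambda,\lambda_c)=0$, contradicting the AMP; thus $(p,p_c)\neq(0,0)$. Once the needle-variation construction of $\mathbf R$ is granted, all four conclusions follow from the AMP by this linear-algebraic reading.
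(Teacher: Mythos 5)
Your route is essentially the paper's: build a Boltyanski approximating cone to the reachable set out of finitely many needle variations, feed it together with an approximating cone to the target into the Abstract Maximum Principle, and transport the resulting multiplier back in time along the adjoint flow to read off the adjoint equation, the maximum condition and transversality. The only presentational difference is that you carry the running cost through an explicit state augmentation $(x^0,x)$ with cost $x^0+\Psi(x)$ from the start, whereas the paper first reduces to the Mayer case $l\equiv 0$ and relegates the augmentation to a footnote; these are the same move in different order, and your version has the small advantage of making visible why $p^0\equiv p_c$ is constant.

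There is, however, one genuine gap in your last step. A single application of the AMP uses one subfamily $\mathcal U_\star$ built from \emph{one finite} collection of needle data $(\tau_1,w_1),\dots,(\tau_m,w_m)$, and the multiplier $(\lambda,\lambda_c)$ it produces depends on that collection. Transporting back therefore yields the inequality $p_c\,(l-l_\star)+p(\tau)\cdot(f-f_\star)\le 0$ \emph{only at those finitely many pairs}, with a multiplier that may change if you enlarge the collection. Your sentence asserting the inequality ``for a.e.\ $\tau$ and all $w\in U$'' conflates this with the actual maximum condition \eqref{maximumc}, which requires a \emph{single} $(p,p_c)$ working simultaneously for almost every $\tau$ and every $w$. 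Extracting that single multiplier is the paper's Step~4: one normalizes $|(p,p_c)|=1$, observes that the set of admissible normalized multipliers associated with each finite family of needle data is a nonempty compact set, that these sets have the finite intersection property, and concludes that their total intersection is nonempty. Your closing claim that ``all four conclusions follow from the AMP by this linear-algebraic reading'' is therefore too strong; the maximum condition needs this additional compactness argument, which you should at least name even if, like the paper, you do not spell it out.
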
\begin{rem} {\rm Let us point out that such a local minimizer is often called {\it weak}, while  a local   minimizer is said 
	{\it strong } if it   minimizes the functional among the larger set of  processes $(u,x)$  such that $x$ belongs to a $C^0$ neighborhood  of $x_\star$. Clearly, a strong local   minimizer is also a weak local   minimizer.
	
}\end{rem}
\begin{rem}\label{p-1}{\rm Since the multipliers $(p,p_c)$ are determined up to multiplication by  a positive constant, it is customary to assume that $p_c\in\{-1,0\}$.\footnote{Let us warn the reader that in most literature it is customary to  consider the non-negative  multiplier $\mu:=-p_c $ in place of $p_c$.}}  \end{rem}
\begin{rem}{\rm
	The  nontriviality condition $$(p(\cdot),p_c) \neq (0,0) \,\,\,\text{in}\,\,\,  W^{1,1}([a,b],(\R^n)^*)\times\R^*$$ is equivalent to 
	the  condition $
(p(t),p_c) \neq (0,0)\in{\R^{n+1}}\quad\forall t\in [a,b]$. Indeed, if $
(p(\bar t),p_c) = (0,0)$ for some $\bar t\in [a,b]$, then the adjoint equation \eqref{adjcon}  is linear, which implies that $p(t)=0$ for all $t\in [a,b]$.
}	\end{rem}

\vv

Agreeing with Definition \ref{normal}, let us give the definitions of normal and abnormal minimizer.

\begin{defin}
	{A {local} minimizer  $(u_\star,x_\star)$ of a general optimal control problem for which every choice of $(p^\star,p_c^\star)$ in the Maximum Principle is such that 
		$p_c^\star<0$ is called {\rm a normal minimizer}. On the contrary, when it is possible to choose  $(p,p_c^\star)\neq 0$ with $p_c^\star=0$, the process $u_\star,x_\star)$  is called  an {\rm abnormal minimizer}. Once again, thanks  to the fact that the adjoint pair $(p,p_c^\star)$ are determined up to a positive multiplicative constant, when a minimizer is normal we can assume  $p_c^\star=-1$. }
\end{defin}

\subsection{Proof of the Maximum Principle}\label{easyproof5}

{ 
	%
	We will prove  the theorem  for the Mayer problem, i.e. under the auxiliary hypothesis
	\bel{mayerh}l\equiv 0.  \footnote{This restriction is more apparent than actual, in that the general situation can be reduced to it by adding a new variable $x^{n+1}$ and a new dynamical equation $\dot x^{n+1}= l(t,x,u)$}\eeq
	 
	\vskip 0.5truecm
	
	The proof   will proceed according to the  following scheme:

	\begin{itemize}

		\item[\bf Step 1.]  Construction of   Boltyanski { approximating cones} to the reachable set at $t=b$ through the use of multiple needle variations.
		\item[\bf Step 2.] Application  of the  {Abstract Maximum Principle }  to prove a finite maximization at the last time  $t=b$ together with a non-transversality condition.\footnote{In some old literature this Step is refered to   as the {\it topological argument of the Maximum Principle})} 
		\item[\bf Step 3.] Transporting of the maximization condition   from the last instant $b$ back to a finite number of instants  $t_1,\ldots,t_m\in[a,b]$.
		
		\item[\bf Step 4.] Use of  some non-empty intersection arguments  to deduce (from { Step 3}) the maximization relation at almost every $t\in [a,b]$.
	\end{itemize}
Let us describe Steps 1-3, skipping Step 4,  which just consists in a simple topological argument.
	\vskip0.6truecm
	{\bf\large Step 1. Approximating the reachable set}
	\vskip0.6truecm
	Let us use   $\mathcal{R}$ to denote  the {\it reachable set at time $b$},  namely
	$$ 
	\mathcal{R}:= \big\{ x(b):  \,\,\,\,(x,u)\  \hbox{ process}\big\}.
	$$
	We now describe how to construct a Boltyanski approximating cone to $\mathcal{R}$ at $x_\star(b)$.
	Let   $\mathcal{L}_{u_\star}\subseteq ]a,b]$ be the subset of Lebesgue points  of the   map \linebreak $]a,b]\ni t\mapsto f(t,x_\star(t),u_\star(t))$. By Lebesgue theorem, one has
	$
	meas(\mathcal{L}_{u_\star}) = b-a
	$.
	Let $m$ be a natural number, and let us choose pairs $(t_i,\us_i)\in \mathcal{L}_{u_\star}\times U$,  $i=1,\dots,{m}$,  such that  
	$$a< t_1<t_2...
	<t_m \leq  t_{m+1}:=b.$$
	For every $ (\varepsilon_1,\dots,\varepsilon_m)\in \R_{\geq 0}^m$ sufficiently small,  let us   define the {\it multiple needle variation} $u_{\varepsilon_1,\dots,\varepsilon_m}$ by setting 
	\bel{multiple}u_{\varepsilon_1,\dots,\varepsilon_m}(t)=\left\{\begin{array}{ll} \us_i\quad &\forall t\in [t_i-\epsilon_i,t_i],\,\,i=1,\dots,m
		\\\,\\
		u_\star(t) &\forall t\in [a,b]\backslash\ds \bigcup_{i=1}^m  [t_i-\epsilon_i,t_i].
	\end{array}
	\right.
\end{equation}
The control $u_{\epsilon_1,\dots,\epsilon_m}$ can be regarded as the superposition of needle-variations at the times $t_i$ with controls $\us_i$, $i=1,\dots,m$.  
Because of our assumptions on $f$, it is easy to verify
that, for  ${(\epsilon_1,\dots,\epsilon_m)}$ sufficiently small,  the solution $x_\epsilon$ of the controlled Cauchy problem in \eqref{c3eq}  corresponding to the control $u_{\varepsilon_1,\dots,\varepsilon_m}$ does exist and is unique {on $[a,b]$}.

Moreover, the following approximation result holds true:
\begin{prop}\label{mvarfweak} There exists  $\delta>0$ such that, for every ${(\varepsilon_1,\dots,\varepsilon_m)}\in [0,\delta]^m$,
	(the solution  $x_{\varepsilon_1,\dots,\varepsilon_m}$ is defined on $[a,b]$ and) 
	{\rm \bel{mvarfweakeq}
	\begin{array}{c}
		x_{\varepsilon_1,\dots,\varepsilon_m}(b) = \\  [0.5em]
		=x_\star(b)  +\ds\sum_{i=1}^m\varepsilon_i  M(b,t_i)
		\cdot  \left[ f(t_i,x_\star(t_i),\us_i) -
		f(t_i,x_\star(t_i),u_\star(t_i))\right] \\ [0.4em]\qquad\qquad\qquad\qquad\qquad\qquad\qquad\qquad \qquad+{o(\varepsilon_1,\dots,\varepsilon_m)},
	\end{array}
\end{equation}}
where $M(\cdot,\cdot)$ is  the fundamental matrix solution associated
with the variational equation 
$
\ds\dot v(t)=   \frac{\partial f}{\partial x}(t, x_\star(t),u_\star(t)) \cdot  v(t).
$ 
\end{prop}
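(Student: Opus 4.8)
The plan is to track the difference $z(t) := x_{\varepsilon_1,\dots,\varepsilon_m}(t) - x_\star(t)$, which satisfies $z(a)=0$, and to show that, to first order, it obeys the linear variational equation with "inputs" concentrated on the needle intervals $I_i := [t_i-\varepsilon_i,t_i]$. Abbreviate $u_\varepsilon := u_{\varepsilon_1,\dots,\varepsilon_m}$, $x_\varepsilon := x_{\varepsilon_1,\dots,\varepsilon_m}$, and $|\varepsilon| := \varepsilon_1+\dots+\varepsilon_m$. The first task is an a priori bound $\|z\|_\infty = O(|\varepsilon|)$ (which simultaneously yields existence and uniqueness of $x_\varepsilon$ on all of $[a,b]$ for $|\varepsilon|$ small, by continuous dependence). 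This follows from the integral identity $z(t) = \int_a^t \big[f(s,x_\varepsilon(s),u_\varepsilon(s)) - f(s,x_\star(s),u_\star(s))\big]\,ds$: off the needles $u_\varepsilon = u_\star$, so the integrand is bounded by $\kappa\,|z(s)|$ (local Lipschitzness of $f$ in $x$, from the $C^1$ hypothesis), while on the needles the integrand is merely bounded and the total needle length is $\sum_i \varepsilon_i = |\varepsilon|$. Gronwall's inequality then gives $|z(t)| \le C\,|\varepsilon|\,e^{\kappa(b-a)}$.

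Next I would compute the increment of $z$ across a single needle $I_i$. On $I_i$ one has $\dot x_\varepsilon = f(\cdot, x_\varepsilon, \us_i)$ and $\dot x_\star = f(\cdot, x_\star, u_\star)$, hence
\[
z(t_i) - z(t_i-\varepsilon_i) = \int_{t_i-\varepsilon_i}^{t_i}\big[f(s,x_\varepsilon(s),\us_i) - f(s,x_\star(s),\us_i)\big]\,ds + \int_{t_i-\varepsilon_i}^{t_i}\big[f(s,x_\star(s),\us_i) - f(s,x_\star(s),u_\star(s))\big]\,ds.
\]
The first integral is $O(\varepsilon_i\,\|z\|_\infty) = O(\varepsilon_i|\varepsilon|) = o(|\varepsilon|)$ by Lipschitzness in $x$ and the a priori bound; in the second, the map $s \mapsto f(s,x_\star(s),\us_i)$ is continuous (constant control value, continuous $f$, continuous $x_\star$), so its integral is $\varepsilon_i f(t_i,x_\star(t_i),\us_i) + o(\varepsilon_i)$, while $t_i$ is a Lebesgue point of $s \mapsto f(s,x_\star(s),u_\star(s))$, so its integral is $\varepsilon_i f(t_i,x_\star(t_i),u_\star(t_i)) + o(\varepsilon_i)$. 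Setting $w_i := f(t_i,x_\star(t_i),\us_i) - f(t_i,x_\star(t_i),u_\star(t_i))$, the needle increment equals $\varepsilon_i w_i + o(|\varepsilon|)$, and crucially it does not depend to leading order on the entry value $z(t_i-\varepsilon_i)$.

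On each free interval, where $u_\varepsilon = u_\star$, the difference solves $\dot z = f(t,x_\star+z,u_\star) - f(t,x_\star,u_\star) = A(t)z + o(|z|)$ with $A(t) := \frac{\partial f}{\partial x}(t,x_\star(t),u_\star(t))$, so $z$ is transported by the fundamental matrix $M(\cdot,\cdot)$ up to higher-order terms. Let $y(t) := \sum_{i:\,t_i\le t}\varepsilon_i M(t,t_i)w_i$ be the candidate first-order solution, which satisfies $\dot y = A(t)y$ on free intervals and jumps by $\varepsilon_i w_i$ at each $t_i$. Applying Gronwall to the error $e := z - y$ — whose increments across needles and across free intervals are each $o(|\varepsilon|)$ by the two previous paragraphs, and using continuity of $M$ to absorb both the $O(\varepsilon_i^2)$ self-interaction terms and the $O(\varepsilon_i)$ discrepancy between placing each jump at $t_i$ and distributing it over $I_i$ — yields $e(b) = o(|\varepsilon|)$, that is, $z(b) = \sum_{i=1}^m \varepsilon_i M(b,t_i)w_i + o(|\varepsilon|)$, which is exactly \eqref{mvarfweakeq}.

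I expect the main obstacle to be the bookkeeping that upgrades the separate estimates $o(\varepsilon_i)$ into a single $o(\varepsilon_1,\dots,\varepsilon_m)$ valid as the vector $(\varepsilon_1,\dots,\varepsilon_m)\to 0$, and in particular controlling how the higher-order errors accumulate and are transported across the (up to $m$) free intervals without interacting at first order. A clean way to organize this, avoiding the global estimate on $e$, is induction on the number $m$ of needles: the base case $m=1$ is the classical single needle variation, and the inductive step treats the trajectory on $[a,t_m-\varepsilon_m]$ by the induction hypothesis, transports the resulting expansion across the last free-plus-needle block by $M(b,\cdot)$, and checks that the cross terms between the last needle and the earlier ones are $O(\varepsilon_m|\varepsilon|) = o(|\varepsilon|)$. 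The Lebesgue-point hypothesis on the $t_i$ is what forces the second needle integral to converge to the pointwise value, and it is indispensable precisely because $u_\star$ is only $L^1$, so $s\mapsto f(s,x_\star(s),u_\star(s))$ is in general discontinuous.
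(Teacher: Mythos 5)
Your argument is correct in outline, but there is nothing in the paper to compare it against: the authors explicitly skip this step, stating that they will omit ``technical passages like the construction of needle variations'' and referring the reader to Cesari and Lee--Markus, so Proposition \ref{mvarfweak} is stated without proof. What you have written is the standard argument that those references contain, and it supplies the missing piece correctly: the a priori bound $\|x_\varepsilon - x_\star\|_\infty = O(|\varepsilon|)$ via Gronwall (which doubles as the continuation argument guaranteeing existence of $x_\varepsilon$ on all of $[a,b]$), the decomposition of each needle increment into a Lipschitz term $O(\varepsilon_i|\varepsilon|)$ plus the difference of two averages, the use of continuity of $s\mapsto f(s,x_\star(s),\us_i)$ for the first average and of the Lebesgue-point property of $t_i$ for the second, the transport of the accumulated first-order perturbation by the fundamental matrix $M$ on the free intervals, and a final Gronwall estimate on $z-y$. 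You also correctly identify the two points that need care: upgrading the individual $o(\varepsilon_i)$ remainders to a single $o(\varepsilon_1,\dots,\varepsilon_m)$ (which works because $m$ is fixed and each remainder can be dominated by $|\varepsilon|\,\tilde\rho_i(|\varepsilon|)$ with $\tilde\rho_i$ a monotone modulus), and the fact that the needle increments do not interact at first order. The only caveat worth recording is that your appeal to a Lipschitz constant for $f$ in $x$, uniform in $(t,\us)$, and to the uniform first-order Taylor remainder on the free intervals requires slightly more than the paper's literal hypothesis that $f(t,\cdot,\us)$ be $C^1$ for each fixed $(t,\us)$; one needs $\partial f/\partial x$ continuous (or at least locally bounded) jointly, which is the standing assumption in the classical references and is evidently what the authors intend, since it is also needed for the variational equation defining $M$ to make sense.
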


 Proposition \ref{mvarfweak} can be rephrased by saying that 
the  map \linebreak $({\varepsilon_1,\dots,\varepsilon_m})\mapsto x_{\varepsilon_1,\dots,\varepsilon_m}(b)$ is differentiable at
$(0,\dots,0)$ in the direction of the cone $[0,+\infty[^m$ with a differential $L:\R^m\to\R^n$ whose i-th column\footnote{$L$  is here  regarded as a $n\times m$ matrix.} is
$
M(b,t_i)
\cdot \left[f(t_i,x_\star(t_i),\us_i) -
f(t_i,x_\star(t_i),u_\star(t_i))\right],
$
for every  $i=1,\dots,m$.}

For all ${\boldsymbol{\varepsilon}}=(\varepsilon_1,\dots,\varepsilon_m)\in [0,+\infty[^m$ sufficiently small, one has 
$
x_{{\boldsymbol{\varepsilon}}}(b)\in {\mathcal{R}}.
$ Furthermore, by  Proposition \ref{mvarfweak} we get 
the following fact:
\begin{cor} The cone
\bel{varcone1}
\mathbf{R}  :=
\text{\rm span}^+
\left\{\begin{array}{l}M_{\star}(b,t_i)\cdot 
	\left[ f(t_i,x_\star(t_i),\us_i) -
	f(t_i,x_\star(t_i),u_\star(t_i))\right]\\ i=1,...,m\end{array}\right\}
\eeq
is   a Boltyanski approximating cone at $x_\star(b)$ of the reachable set $\mathcal{R}$. \end{cor}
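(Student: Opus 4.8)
The plan is to check directly the three ingredients in the definition of Boltyanski approximating cone (Definition \ref{approximatingcone}), reading off the required first-order expansion from Proposition \ref{mvarfweak}. I would take as convex cone $C:=[0,+\infty[^{\,m}\subseteq\R^m$, and as linear map $L\colon\R^m\to\R^n$ the $n\times m$ matrix whose $i$-th column $L\mathbf{e}_i$ is
$$L\mathbf{e}_i:=M_\star(b,t_i)\cdot\big[f(t_i,x_\star(t_i),\us_i)-f(t_i,x_\star(t_i),u_\star(t_i))\big],\qquad i=1,\dots,m.$$
Letting $\delta>0$ be as in Proposition \ref{mvarfweak}, I would then define $F\colon C\cap B_m(\delta)\to\mathcal R$ by $F(\boldsymbol{\varepsilon}):=x_{\boldsymbol{\varepsilon}}(b)$. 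This is well posed because every $\boldsymbol{\varepsilon}\in C\cap B_m(\delta)$ satisfies $\varepsilon_i\in[0,\delta]$ for each $i$, hence $\boldsymbol{\varepsilon}\in[0,\delta]^m$, which is exactly the range of parameters covered by Proposition \ref{mvarfweak}; moreover $F(0)=x_\star(b)$.

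With these choices the verification is short. Since $(u_{\boldsymbol{\varepsilon}},x_{\boldsymbol{\varepsilon}})$ is a process for every admissible $\boldsymbol{\varepsilon}$, one has $x_{\boldsymbol{\varepsilon}}(b)\in\mathcal R$, so $F$ indeed takes values in $\mathcal R$. The expansion \eqref{mvarfweakeq} says precisely that $F(\boldsymbol{\varepsilon})=x_\star(b)+\sum_{i=1}^m\varepsilon_i\,L\mathbf{e}_i+o(\boldsymbol{\varepsilon})=x_\star(b)+L\boldsymbol{\varepsilon}+o(\boldsymbol{\varepsilon})$, i.e. $F$ is differentiable at $0$ in the direction of $C$ with differential $L$. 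Finally, because $C=[0,+\infty[^{\,m}$ and $L\boldsymbol{\varepsilon}=\sum_{i=1}^m\varepsilon_i\,L\mathbf{e}_i$, the image $LC$ equals $\text{\rm span}^+\{L\mathbf{e}_1,\dots,L\mathbf{e}_m\}$, which is precisely the cone $\mathbf R$ of \eqref{varcone1}; thus $\mathbf R=LC$ as required.

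The only step demanding genuine care is the continuity of $F$ on the whole domain $C\cap B_m(\delta)$, since Proposition \ref{mvarfweak} by itself only yields continuity at the vertex $\boldsymbol{\varepsilon}=0$. I would obtain it from continuous dependence of the trajectory on the control. By \eqref{multiple}, the needle controls $u_{\boldsymbol{\varepsilon}}$ and $u_{\boldsymbol{\varepsilon}'}$ coincide outside a set of measure at most $\sum_{i=1}^m|\varepsilon_i-\varepsilon_i'|$, on which $f$ evaluated along a fixed trajectory is bounded (here the boundedness of $U$ and the continuity of $f$ enter); feeding this into the integral form of \eqref{c3eq} and applying Gr\"onwall's lemma, via the local Lipschitz dependence of $f$ on $x$, yields $|x_{\boldsymbol{\varepsilon}}(b)-x_{\boldsymbol{\varepsilon}'}(b)|\le \kappa\sum_{i=1}^m|\varepsilon_i-\varepsilon_i'|$ for a constant $\kappa$, hence the continuity (indeed Lipschitz continuity) of $F$. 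Having verified all items of Definition \ref{approximatingcone}, the cone $\mathbf R=LC$ is a Boltyanski approximating cone to $\mathcal R$ at $x_\star(b)$, which is the assertion of the corollary.
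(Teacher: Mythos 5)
Your proposal is correct and follows essentially the same route as the paper, which likewise verifies Definition \ref{approximatingcone} directly with the choices $C:=[0,+\infty[^m$, $F(\varepsilon_1,\dots,\varepsilon_m):=x_{\varepsilon_1,\dots,\varepsilon_m}(b)$, and the differential $L$ read off from Proposition \ref{mvarfweak}. The only difference is that you explicitly supply the continuity of $F$ away from the vertex via Gr\"onwall's lemma, a point the paper leaves implicit; this is a welcome addition rather than a departure.
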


Indeed,  Definition \ref{approximatingcone} is met by setting  $\mathcal{A}:=\mathcal{R}$, ${\mathbf{K}}=\mathbf{R}$, $x:= x_\star(b)$, $C:=[0,+\infty[^m$,  $F({\varepsilon_1,\dots,\varepsilon_m}):= x_{\varepsilon_1,\dots,\varepsilon_m}(b)$.

\vskip0.6truecm
{\bf\large Step 2. Applying the Abstract Maximum Principle}
\vskip0.6truecm
Let us define an input-output mapping mapping $\mathcal{U}\ni u\mapsto x[u]$, by setting $x[u]= x(b)\in\mathcal{R}$  as soon as $(u,x)$ is a process of the control system \eqref{c3eq}.
With this position,  the optimal control problem \eqref{c3}-\eqref{c3con} reads $$
\min_\U {\Psi}(x[u]),\qquad x[u]\in {\mathcal S},
$$ and we can apply the Abstract Maximum Principle (Theorem \ref{AMP}).
Accordingly, choosing the variational cone $\mathbf{R}$ defined in \eqref{varcone1} as an approximating cone to the reachable set $\mathcal{R}$ at $x_\star(b)$, we deduce that   there exists a linear form  $(\lambda,\lambda_c)\in (\R^{n+1})^*$ such that $\lambda_c\leq 0$ and   the following conditions are verified:

\bel{ant11}(\lambda, \lambda_c)\neq 0;\end{equation}
\bel{amp111}\max \Big\{\big(\lambda  + \lambda_c \nabla\Psi  ( x_\star(b))\big)\cdot  v, \,\,\, v\in \mathbf{R}\Big\} =0  ;\end{equation}
\bel{antranv1}\lambda  \in -\mathbf{S}^\bot .\end{equation}
Setting \bel{p(b)}
p_c:=\lambda_c,\qquad p(b) = \lambda_c \nabla\Psi( x_\star(b)) +  \lambda \qquad
\end{equation}  
from \eqref{amp111} and \eqref{varcone1} we get, for every $i=1,\ldots,m$, the  inequality
\bel{quasiadj}\begin{array}{c}
p(b)\cdot  M(b,t_i)\cdot 
\left[ f(t_i,x_\star(t_i),\us_i) -
f(t_i,x_\star(t_i),u_\star(t_i))\right] \leq 0 .
\end{array}
\end{equation}

\vskip0.6truecm\noindent
{\bf\large Step 3. Transporting cone separability  back from $b$ to $t$.}
\vskip0.6truecm

If we set   $p(t):=\ds\left(\lambda_c \nabla\Psi( x_\star(b)) +  \lambda \right)\cdot  M(b,t)$ for every $t\in[a,b]$, then the  absolutely continuous map  $p(\cdot)$ is the solution of the adjoint equation \eqref{adjcon} (remind that $l\equiv0$)  with terminal condition given by the transversality condition \eqref{eq:trasv cond}, namely $p(b):=\ds\lambda_c \nabla\Psi( x_\star(b)) +  \lambda $. Therefore, \eqref{quasiadj} can be written as
\bel{quasiadjback}\begin{array}{c}
p(t_i)\cdot 
\left[ f(t_i,x_\star(t_i),\us_i) -
f(t_i,x_\star(t_i),u_\star(t_i))\right] \leq 0 ,
\end{array}
\end{equation}
for every $i=1,\ldots,m$. Hence, we have obtained a maximum relation which coincides with  \eqref{maximumc} when the latter is restricted  to the points $t_1,\ldots,t_m$. In the next step we will utilize a infinite intersection argument to deduce \eqref{maximumc} almost everywhere in $[a,b]$.

\addcontentsline{toc}{section}{Bibliografia}


\begin{thebibliography}{9}
	\bibitem{ces} Cesari, L.,  {\it Optimization Theory and Applications}, Springer-Verlag, 1983
	
	\bibitem{leem}Lee, E.B.,   Markus, L. {\it Foundations of Optimal Control Theory}, John Wiley, New York, 1967.
	
	\bibitem{suss1} Sussmann H.J. {\it Geometry and optimal control},  Mathematical control theory, Baillieul, J. and  Willems, J. C. Eds., Springer-Verlag, New York, 1998, pages 140-198.
	
 \bibitem{suss2}  Sussmann H.J. {\it Set separtion,transversality, and the Lipschitz Maximum Principle}, Journal of Differential Equations, Vol. 243, No. 2, December 15, 2007, pages 446-488. 
	
	\bibitem{susswill} Sussmann, H.J., Willems, J.C.,  {\it The brachistochrone problem and modern control theory}, Contemporary trends in non-linear geometric control theory and its applications, A. Anzaldo-Meneses, B.Bonnard, J.-P. Gauthier, and F. Monroy-Perez Eds; World Scientific Publishers, Singapore, March 2002; pages 113-165. 



\end{thebibliography}
\end{document}